\documentclass[10pt]{amsart}

\usepackage{graphicx}
\usepackage{amssymb,mathrsfs}

\usepackage{enumerate}        %for easier lists

\usepackage[all,dvips]{xy}    %to get \xymatrix for diagrams
\UseComputerModernTips        %better looking tips for diagrams
%we may not need any diagrams; this is here just in case for now

%new commands and aliases

\newtheorem{theorem}{Theorem}[section]
\newtheorem{lemma}[theorem]{Lemma}

\newtheorem{proposition}[theorem]{Proposition}

\theoremstyle{definition}
\newtheorem{definition}[theorem]{Definition}
\newtheorem{example}[theorem]{Example}
\newtheorem{remark}[theorem]{Remark}
\newtheorem{note}[theorem]{Note}

\sloppy 

\begin{document}

\title[Hyperplanes in CAT(0) cubical complexes]
{A proof of Sageev's Theorem on hyperplanes in CAT(0) cubical complexes}
\author[D.Farley]{Daniel Farley}
      \address{Department of Mathematics, Miami University \\
               Oxford, OH 45056}
      \email{farleyds@muohio.edu}

\begin{abstract}  
We prove  
that any hyperplane $H$ in a CAT(0) cubical complex $X$ has no
self-intersections and separates
$X$ into two convex complementary components.  These facts were originally
proved by Sageev.  Our argument shows that his theorem is a corollary of
Gromov's link condition.

We also give new arguments establishing some combinatorial properties of hyperplanes.  We show that these
properties are sufficient to prove that the $0$-skeleton of any CAT(0) cubical complex is a discrete median algebra,
a fact that was previously proved by Chepoi, Gerasimov, and Roller.    
\end{abstract}

\subjclass[2000]{20F65}

\keywords{CAT(0) cubical complex, hyperplane, Haagerup property, median algebra}

\maketitle

%%%%%%%%%%%%%%%%%%%%%%%%%%%%%%%%%%%%%%%%%%%%%%%%%%%%%%%%%%%%%%%%%%%%%%%%%%%%%
\section{Introduction}
%%%%%%%%%%%%%%%%%%%%%%%%%%%%%%%%%%%%%%%%%%%%%%%%%%%%%%%%%%%%%%%%%%%%%%%%%%

Two theorems are central in the theory of CAT(0) cubical complexes.  The first is Gromov's well-known link condition.  
A complete statement and proof appear in \cite{BH}.  The second theorem was proved by Sageev in \cite{Sageev}.  
He showed that a group $G$ semisplits over a subgroup
$H$ if and only if $G$ acts on a CAT(0) cubical complex $X$ and there is a hyperplane $J \subseteq X$ such that: i)  the action of $G$
is essential relative to $J$, and ii) the stabilizer of $J$ (as a set) is $H$.  We refer the reader to \cite{Sageev} for details and
definitions.  Sageev's result extends the Bass-Serre theory of groups acting on trees, which says that a group $G$ splits over
$H$ if and only if $G$ acts without inversion on a tree $T$, in which the stabilizer subgroup of some edge $e$ is $H$.  Moreover, 
just as Bass-Serre theory gives a construction of the tree $T$ from the splitting of $G$ over $H$, Sageev gives a construction of 
the CAT(0) cubical complex $X$ from the semisplitting of $G$ over $H$.  Both theories are also alike in that they explicitly describe
the algebraic splittings or semisplittings using their geometric hypotheses.

Both the forward and the reverse directions of Sageev's theorem have significant applications.  The forward direction
(from algebra to geometry) is used in \cite{NR2} and \cite{Wise}, among others.  The proof of the reverse direction uses several properties of hyperplanes
in CAT(0) cubical complexes
(also established in \cite{Sageev}).  Many of these properties are useful in their own right.  For instance, Sageev showed that a hyperplane in a CAT(0) cubical 
complex $X$ has no self-intersections and separates $X$ into two convex complementary components \cite{Sageev}.  
This fact is essential in the proof that groups
acting properly, isometrically, and cellularly on CAT(0) cubical complexes have the Haagerup property \cite{NR1}.  
Sageev establishes the geometric properties of hyperplanes in CAT(0)
cubical complexes using his own system of Reidemeister-style moves.

The main purpose of this note is to offer a new (and, we believe, simpler) 
proof of the following theorem, which we hereafter call ``Sageev's Theorem" for the sake of brevity:
\begin{theorem} \label{thm:intro} \cite{Sageev}
A hyperplane $H$ in a CAT(0) cubical complex $X$ has no self-intersections and separates $X$ into two open convex complementary components.
\end{theorem}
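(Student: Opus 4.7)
The plan is to reduce the theorem to Gromov's link condition via the \emph{carrier} of the hyperplane $H$, by which I mean the union $N(H)$ of all closed cubes whose midcube belongs to $H$. The strategy is to first establish a product decomposition $N(H) \cong H \times [-1, 1]$, which simultaneously proves that $H$ is embedded (no self-intersections), identifies two local sides $N^+, N^-$ of $H$, and exposes the combinatorial structure needed for separation and convexity.

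The substantive step, and the one where I would invoke the link condition, is to show that each cube of $N(H)$ contains exactly one midcube of $H$ and that these midcubes glue consistently across overlapping cubes. If a single cube $C$ contained two midcubes of $H$, then two adjacent perpendicular edges at some vertex $v$ of $C$ would lie in the same equivalence class; tracing this equivalence through opposite-edge relations inside the link of $v$ should produce a configuration of simplices that violates the flag condition. A parallel argument handles the compatibility of two overlapping cubes $C_1, C_2 \subseteq N(H)$ at a common vertex. These two claims together yield the product structure of $N(H)$, and hence the no-self-intersection claim together with two local complementary sides.

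To extend the local sides $N^+, N^-$ to a global separation of $X$, I would use a parity argument. Define $f \colon \{ \text{edge-paths in } X^{(1)} \} \to \Z / 2$ by $f(\gamma) = \#\{ \text{edges of } \gamma \text{ in the equivalence class defining } H \} \bmod 2$. The function $f$ is invariant under the elementary homotopy that replaces two sides of a square by the other two, because opposite edges of a square lie in the same equivalence class. Since $X$ is CAT(0) and hence simply connected, such square-homotopies relate any two edge-paths with the same endpoints, so $f$ descends to a well-defined $2$-coloring of $X^{(0)}$; this $2$-coloring pins down the two complementary components of $X \setminus H$ globally.

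For the convexity of each component, I would argue via uniqueness of CAT(0) geodesics. If $x, y$ lie in the same component but the geodesic $[x,y]$ crossed $H$, the parity argument would force an even number of crossings, and the product structure on $N(H)$ would let me perform a surgery on the segments where $[x,y]$ enters and leaves $N(H)$ to produce a strictly shorter path, contradicting minimality. The main obstacle, I expect, is the link-condition step producing the product decomposition of $N(H)$, since it requires a careful analysis of chains of opposite-edge relations inside vertex links; once that is in hand, the separation and convexity statements follow from relatively routine applications of simple connectedness and geodesic minimality.
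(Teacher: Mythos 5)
Your overall strategy — identify a carrier of $H$, give it a product structure, then use that structure plus simple connectedness to get separation and convexity — is the right family of ideas, and your parity/van Kampen flavor for the separation step is a perfectly reasonable alternative to what the paper does (the paper composes the height function with the CAT(0) projection onto the carrier and analyzes $h\circ\pi$ along geodesics; you instead $2$-color $X^{(0)}$ by counting $H$-edges mod $2$ along paths and invoke square-homotopies). But there is a real gap in the step you yourself flag as the substantive one.

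Your argument that no cube contains two midcubes of $H$ (equivalently, that $H$ has no self-intersections) claims that if two perpendicular edges $e_1, e_2$ at a vertex $v$ lie in the same equivalence class, then ``tracing this equivalence through opposite-edge relations inside the link of $v$'' yields a flag violation. This does not work as stated: the equivalence relation on directed edges is generated by opposite sides of squares \emph{anywhere in $X$}, so the chain $e_1 = f_0, f_1, \ldots, f_k = e_2$ wanders through the entire $2$-skeleton and is not visible inside $\mathrm{Lk}(v,X)$. A self-intersecting hyperplane is a \emph{global} phenomenon, and a link check at a single vertex cannot detect it directly. This is precisely the difficulty the paper's block complex $\mathcal{B}(X)$ is designed to solve: it ``unfolds'' each hyperplane into an abstract, a priori disjoint object (the disjoint union of marked cubes modulo the obvious gluings), and the link condition is then verified at vertices of $\mathcal{B}(X)$ — which \emph{is} a local check. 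The passage from that local check to the global conclusion that the component $B$ embeds isometrically in $X$ is supplied by a genuine local-to-global theorem (Crisp--Wiest's cubical version of the Gromov link criterion, combined with Proposition~4.14 in \cite{BH}: a local isometry from a complete length space into a complete CAT(0) space is $\pi_1$-injective and lifts to an isometric embedding of universal covers). Without some analogue of this — an abstract carrier to localize the equivalence relation, and a local-to-global theorem to promote the link check — the ``flag violation at $v$'' argument has no content, and everything downstream (the product structure $N(H)\cong H\times[-1,1]$, the two local sides, the surgery argument for convexity) rests on it. If you repair this step in the way just described, the remainder of your sketch can be made to go through, though your convexity surgery will also need the product structure on the carrier to be globally valid, which is again exactly what the embedding of $B$ provides.
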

Our proof avoids using Sageev's Reidemeister moves.
The main tool is a block complex $\mathcal{B}(X)$, which is endowed with a natural projection $\pi_{\mathcal{B}}: \mathcal{B}(X) \rightarrow X$.  We apply
a criterion, due to Crisp and Wiest \cite{CW}, for showing that a map between cubical complexes is an isometric embedding.  The criterion is a generalized form
of Gromov's link condition.  We are thus able to 
conclude that the restriction of $\pi_{\mathcal{B}}$ to each connected component of $\mathcal{B}(X)$ is an isometric embedding.
The full statement of Theorem \ref{thm:intro} then follows from the definition of $\mathcal{B}(X)$ after a little more work.

We also give new proofs of some of Sageev's secondary results -- see Subsection \ref{subsec:sageev}, especially Propositions \ref{prop:geodesic} and
\ref{prop:helly}.  Sageev's original proofs used his Reidemeister moves.  Our proofs use techniques from the theory of CAT(0) spaces, including
(especially) projection maps onto closed convex subspaces.  

The paper concludes with some applications.  We sketch a proof of the theorem that every group $G$ acting properly, isometrically, and cellularly on 
a CAT(0) cubical complex has the Haagerup property.  (The first proof appeared in \cite{NR1}.)  We also show that the $0$-skeleton of a CAT(0) cubical complex
is a discrete median algebra under the ``geodesic interval'' operation.  Earlier proofs of the discrete median algebra
property appear in \cite{Chepoi} and \cite{Gerasimov}, and
Martin Roller produced a proof in his unpublished Habilitation Thesis \cite{Roller}.  Our argument is intended to highlight the utility of the combinatorial
lemmas collected in Subsection \ref{subsec:lemmas}, and, in particular, to suggest that the latter lemmas are a sufficient basis
for establishing all of the combinatorial
properties of CAT(0) cubical complexes.  (Indeed, ``discrete median algebra'' and ``CAT(0) cubical complex'' are equivalent ideas, by \cite{Roller}, 
\cite{Gerasimov},
and \cite{Nica}.)  We refer the reader to \cite{CDH} for elegant characterizations of the Haagerup property and property $T$ in terms of median algebras.
          
We note one limitation of the general methods of this paper:  our methods apply only to locally finite-dimensional cubical complexes satisfying Gromov's
link condition.  We need our complexes to be locally finite-dimensional so that their metrics will be complete (see \cite{BH}, Exercise 7.62, page 123).  
In fact, the CAT(0) property
has been established only for locally finite-dimensional cubical complexes satisfying the link condition -- see the passage after Lemma 2.7 
in \cite{Haglund1} for
a useful discussion of this point.  Although our argument is therefore slightly less general than the original one of Sageev, it still covers the
cases that are most commonly encountered in practice.

Section \ref{sec:block} contains a description of the block complex.  Section \ref{sec:lemma} describes the analogue of Gromov's theorem
we need from \cite{CW}.  
Section \ref{sec:main} contains a proof of Sageev's theorem, Theorem \ref{thm:intro}.  Section \ref{sec:combinatorial} collects some essential
combinatorial lemmas.  Finally, Section \ref{sec:applications} contains applications of the main ideas, including proofs that every CAT(0) cubical
complex is a set with walls and that the $0$-skeleton of every CAT(0) cubical complex is a discrete median algebra.

I would like to thank Dan Guralnick for a helpful discussion related to this work, and for telling me about Roller's dissertation.
  
%%%%%%%%%%%%%%%%%%%%%%%%%%%%%%%%%%%%%%%%%%
\section{The Block Complex} \label{sec:block}
%%%%%%%%%%%%%%%%%%%%%%%%%%%%%%%%%%%%%%%%%%%

\begin{definition}
A cubical complex $X$ is \emph{locally finite-dimensional} if the link of each vertex is
a finite-dimensional simplicial complex.
\end{definition}

Throughout the paper, ``CAT(0) cubical complex'' means locally finite-dimensional CAT(0) cubical complex.

\begin{definition}
Let $C \subseteq X$ be a cube of dimension at least one.  A \emph{marking} of $C$ is an equivalence class
of directed edges $e \subseteq C$.  Two such directed edges $e'$, $e''$ are said to be \emph{equivalent}, i.e., to define the same
marking, if there is a sequence of directed edges $e' = e_{0}, \ldots, e_{k} = e''$ such that, for $i \in \{ 0, \ldots, k-1 \}$,
$e_{i}$ and $e_{i+1}$ are opposite sides of a $2$-cell $C_{i} \subseteq C$ and both point in the same direction.  
A \emph{marked cube} is a cube (of dimension at least one) with a marking.
\end{definition}

\begin{example}
Let $X = [0,1]^{3}$, with the usual cubical structure.  We let $C=X$.  There are six markings of $C$.  They are represented by
the directed edges $[(0,0,0),(1,0,0)]$, $[(0,0,0),(0,1,0)]$, $[(0,0,0),(0,0,1)]$, and by the three 
corresponding edges with the opposite directions.
\begin{figure}[!h]
\begin{center}
\includegraphics{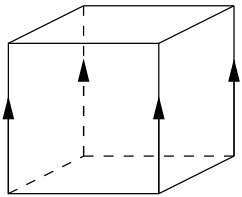}
\end{center}
\caption{The directed edge $[(0,0,0),(0,0,1)]$ determines the marking of the cube.  The $x$-axis is horizontal, and the coordinate system
is a right-handed one.}
\label{fig:cube}
\end{figure}
\end{example}

It is fairly clear from the example that a cube of dimension $n$ has exactly $2n$ markings.  Note that not every face of a marked cube is itself
marked.  In Figure \ref{fig:cube}, the top and bottom faces are unmarked.

\begin{definition}
Let $X$ be a CAT(0) cubical complex.  We let $\mathcal{M}(X)$ denote the space of marked cubes of $X$, which is defined to be
the disjoint union of all marked cubes of $X$.  More formally, $\mathcal{M}(X)$ is the space of triples $(x, C, [e])$, where $C$
is a cube in $X$, $[e]$ is a marking of $C$, and $x \in C$.  For fixed $C$ and $[e]$, the set
$$ C_{[e]} = \{ (x,C,[e]) \mid x \in C \}$$
is an isometric copy of $C$, and $\mathcal{M}(X)$ is the disjoint union of all such sets $C_{[e]}$.  
There is a natural map $\pi_{\mathcal{M}}: \mathcal{M}(X) \rightarrow X$, defined by sending $(x,C,[e])$ to $x$.
\end{definition}

\begin{example}
If $X = [0,1]^{3}$, then $\mathcal{M}(X)$ is a disjoint union of $24$ marked edges, $24$ marked squares, and $6$ marked
three-dimensional cubes.
\end{example}

\begin{definition}
Let $(x_{1}, C_{1}, [e_{1}])$, $(x_{2}, C_{2}, [e_{2}]) \in \mathcal{M}(X)$.
We write $(x_{1}, C_{1}, [e_{1}]) \sim (x_{2}, C_{2}, [e_{2}])$ if:
\begin{enumerate}
\item $x_{1} = x_{2}$, and
\item there is a directed edge $e \in [e_{1}] \cap [e_{2}]$.
\end{enumerate}
\end{definition}

\begin{lemma} \label{lemma:transitive}
The relation $\sim$ is an equivalence relation on $\mathcal{M}(X)$.
\end{lemma}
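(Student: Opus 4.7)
Reflexivity holds because any representative of $[e]$ is an element of $[e]\cap[e]$, and symmetry is built into the definition, so I will focus on transitivity. Suppose $(x,C_1,[e_1])\sim(x,C_2,[e_2])\sim(x,C_3,[e_3])$ with witnesses $e\in[e_1]\cap[e_2]\subseteq C_1\cap C_2$ and $e'\in[e_2]\cap[e_3]\subseteq C_2\cap C_3$. My task is to construct a single directed edge $f\subseteq C_1\cap C_3$ that belongs to $[e_1]\cap[e_3]$.

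The plan is to work entirely inside $C_2$, treating the marking $[e_2]$ as a common reference for the other two. Concretely, identify $C_2$ with $[0,1]^n$ so that $[e_2]$ is the positive first-coordinate direction. Because $C_1\cap C_2$ is a face of $C_2$ containing the edge $e$, this face leaves the first coordinate free; the same is true of $C_2\cap C_3$, since it contains $e'$. The intersection of these two faces is itself a face of $C_2$ (its fixed-coordinate set is the union of the two, the constraints being compatible since the common point $x$ satisfies both), and in particular it still leaves the first coordinate free. I would take $f$ to be any directed edge in $C_1\cap C_2\cap C_3$ pointing in the positive first direction of $C_2$; such an edge exists because a face of a cube with a given axis free contains an edge along that axis.

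It then remains to verify that $f$ belongs to $[e_1]\cap[e_3]$. The edges $f$ and $e$ both sit in the common face $C_1\cap C_2$ and are parallel and co-oriented there, so they are connected by a sequence of opposite-sides-of-$2$-cells lying entirely inside that face (and hence inside $C_1$), giving $f\in[e_1]$; the symmetric argument using $C_2\cap C_3$ gives $f\in[e_3]$. The one step that requires genuine care, and which I would spell out carefully, is the observation that a marking of a cube amounts to a distinguished oriented coordinate axis of the cube, and that this correspondence behaves naturally under taking a face that contains a representative edge. With that viewpoint in hand, the entire argument reduces to straightforward combinatorics inside the single cube $C_2$, and no appeal to the CAT(0) property or to the link condition is required.
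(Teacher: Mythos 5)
Your proof is correct and follows essentially the same route as the paper's: both work inside $C_2$, using the marked direction of $C_2$ as a common reference axis, observe that $C_1\cap C_2$ and $C_2\cap C_3$ are faces of $C_2$ with that axis free, and then pick a directed edge along that axis in the (non-empty) triple intersection $C_1\cap C_2\cap C_3$ as the common witness. The paper phrases this via the product decomposition $C_2 = C'_2 \times [0,1]$ while you use coordinate-axis language, but the underlying argument is the same.
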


\begin{proof}
It is already clear that $\sim$ is reflexive and symmetric.

We prove that $\sim$ is transitive.  Thus, we suppose that $(x_{1},C_{1},[e_{1}]) \sim (x_{2},C_{2},[e_{2}])$
and $(x_{2},C_{2},[e_{2}]) \sim (x_{3},C_{3},[e_{3}])$.  Clearly, $x_{1} = x_{2} = x_{3}$.  We can express $C_{2}$
as $C'_{2} \times [0,1]$, where $C'_{2}$ is a cube of dimension one less than the dimension of $C_{2}$, and the
second factor $[0,1]$ is the marked one.  Since $C_{1} \cap C_{2}$ is a marked face of $C_{2}$ (because of the
condition $[e_{1}] \cap [e_{2}] \neq \emptyset$), we must have $C_{1} \cap C_{2} = \widehat{C} \times [0,1]$, for
some non-empty face $\widehat{C} \subseteq C'_{2}$.  Similarly, $C_{2} \cap C_{3} = \widetilde{C} \times [0,1]$, 
for some non-empty face $\widetilde{C} \subseteq C'_{2}$.  Now $C_{1} \cap C_{2} \cap C_{3} \neq \emptyset$, since
$x_{1} \in C_{1} \cap C_{2} \cap C_{3}$.  It follows that $C_{1} \cap C_{2} \cap C_{3} = (\widehat{C} \times \widetilde{C}) \times [0,1]$,
where $\widehat{C} \times \widetilde{C}$ is a non-empty face of $C'_{2}$.

Let us suppose that the marking $[e_{2}]$ of $C_{2}$ is determined by the directed edge $e_{2} = [(v,0), (v,1)]$, where $v$ is a vertex of $C'_{2}$.
It follows easily from the conditions $[e_{1}] \cap [e_{2}] \neq \emptyset$ and $[e_{2}] \cap [e_{3}] \neq \emptyset$ that the directed edge
$[(v',0),(v',1)] \subseteq C_{2}$ is in $[e_{1}]$ (respectively, $[e_{3}]$) if and only if $v' \in \widehat{C}$ (respectively, $\widetilde{C}$).
Thus, if $v$ is a vertex of $\widehat{C} \cap \widetilde{C}$, then $[(v,0),(v,1)] \in [e_{1}] \cap [e_{3}]$.  Such a vertex exists since
$\widehat{C} \cap \widetilde{C} \neq \emptyset$, and this completes the proof.
\end{proof}

\begin{definition}
The \emph{block complex of $X$}, denoted $\mathcal{B}(X)$, is the quotient
$\mathcal{M}(X)/ \sim$.
\end{definition}

\begin{definition} \cite{CW}
A map $f: X \rightarrow Y$ between cubical complexes is called \emph{cubical}
if each cube in $X$ is mapped isometrically onto some cube in $Y$.
\end{definition}

We record the following lemma, the proof of which is straightforward.

\begin{lemma}
The space $\mathcal{B}(X)$ is a cubical complex with a natural cubical map
$\pi_{\mathcal{B}}: \mathcal{B}(X) \rightarrow X$, defined by $\pi(x,C,[e]) = x$. \qed
\end{lemma}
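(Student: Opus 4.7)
The plan is to exhibit the cubes of $\mathcal{B}(X)$ as the images of the marked cubes $C_{[e]}$ under the quotient map $q : \mathcal{M}(X) \rightarrow \mathcal{B}(X)$, and then verify the standard axioms of a cubical complex: each cell is an isometric image of a Euclidean cube, and any two cells meet in a common face. The well-definedness and cubicality of $\pi_{\mathcal{B}}$ will then follow almost immediately from the construction.

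The first step is to show that for each marked cube $(C,[e])$, the restriction $q|_{C_{[e]}}$ is injective. Indeed, if $q(x,C,[e]) = q(x',C,[e])$, then $(x,C,[e]) \sim (x',C,[e])$, and condition (1) of the equivalence relation forces $x = x'$. Thus $q(C_{[e]})$ is an isometric copy of $C$ inside $\mathcal{B}(X)$; I will call such an image a \emph{block}. The next step is to check that any two blocks $q(C_{1,[e_1]})$ and $q(C_{2,[e_2]})$ meet in a common face of each. Suppose $(x, C_1, [e_1]) \sim (x, C_2, [e_2])$. Writing $C_i = C_i' \times [0,1]$ so that the second factor carries the marking, the same product-decomposition argument used in Lemma \ref{lemma:transitive} shows that the locus of such $x$ is a face of the form $\widehat C \times [0,1] \subseteq C_1 \cap C_2$, where $\widehat C$ is a common face of $C_1'$ and $C_2'$. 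This face inherits both markings $[e_1]$ and $[e_2]$, and it embeds as a face into each of the two blocks in a manner that is compatible under $q$ — exactly the gluing condition needed for a cubical structure.

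Finally, $\pi_{\mathcal{B}}$ is well-defined because the equivalence relation demands $x_1 = x_2$. The composition $\pi_{\mathcal{B}} \circ q$ is the natural projection $\pi_{\mathcal{M}} : \mathcal{M}(X) \rightarrow X$, which restricts to an isometry of $C_{[e]}$ onto $C$. Combined with the injectivity of $q|_{C_{[e]}}$ established above, this shows that $\pi_{\mathcal{B}}$ carries each block of $\mathcal{B}(X)$ isometrically onto the underlying cube $C$ of $X$, which is exactly the definition of a cubical map. The only real subtlety is the compatibility of the face identifications where two blocks meet, and that is handled by re-running the product-decomposition argument from the previous lemma; once this is in hand, everything else is bookkeeping.
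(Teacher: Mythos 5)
The paper declares this lemma ``straightforward'' and supplies no proof, so there is no argument of its own to compare against; your sketch is a sound way to fill in the details. The two key checks are exactly the ones you identify: injectivity of the quotient map $q$ on each marked cube $C_{[e]}$ (which follows directly from condition (1) of $\sim$), and the face-intersection property for two blocks (which is precisely the product-decomposition argument already used to prove Lemma~\ref{lemma:transitive}). The well-definedness and cubicality of $\pi_{\mathcal{B}}$ then come for free since $\pi_{\mathcal{B}}\circ q = \pi_{\mathcal{M}}$. One small imprecision worth fixing: condition (2) of $\sim$ does not depend on $x$, so the set of points $x$ with $(x,C_1,[e_1]) \sim (x,C_2,[e_2])$ is either empty (when $[e_1]\cap[e_2] = \emptyset$) or all of $C_1 \cap C_2$, and it is this entire intersection that equals $\widehat{C}\times[0,1]$. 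Your ``$\subseteq$'' should be an ``$=$''; the argument is otherwise unaffected.
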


\begin{example}
We describe the cubical complex $\mathcal{B}(X)$ in a special case.  Suppose that
$X = \mathbb{R}^{2}$ with the standard cubulation.  The complex $\mathcal{B}(X)$
consists of an infinite disjoint union of strips having either the form $[m,m+1] \times \mathbb{R}$
or $\mathbb{R} \times [n,n+1]$ ($m,n \in \mathbb{Z}$).  The map 
$\pi_{\mathcal{B}}: \mathcal{B}(X) \rightarrow X$ is ``inclusion''.  Note that there  are two identical
copies of each strip $[m,m+1] \times \mathbb{R}$ in $\mathcal{B}(X)$, since there are two different orientations
for the edge $[m,m+1] \times \{ 0 \}$.  (There are also two copies of $\mathbb{R} \times [n,n+1]$ in $\mathcal{B}(X)$
for a similar reason.)
\end{example}

%%%%%%%%%%%%%%%%%%%%%%%%%%%%%%%%%%%%%%%%%%%%
\section{A geometric lemma} \label{sec:lemma}
%%%%%%%%%%%%%%%%%%%%%%%%%%%%%%%%%%%%%%%%%%%5

The main lemma  of this section (Lemma \ref{lemma:fundamental}) relies heavily on a theorem due to Crisp and
Wiest.

\begin{theorem} (\cite{CW}, Theorem 1(2)) Let $X$ and $Y$ be locally finite-dimensional cubical
complexes and $\Phi: X \rightarrow Y$ a cubical map.  Suppose that $Y$
is locally CAT(0).  The map $\Phi$ is a local isometry if and only if, for
every vertex $x \in X$, the simplicial map 
$Lk(x, X) \rightarrow Lk(\Phi(x), Y)$ induced by $\Phi$ is injective with
image a full subcomplex of $Lk(\Phi(x), Y)$.
\end{theorem}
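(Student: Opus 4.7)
The plan is to localize the problem at each vertex $x \in X$, where the metric structure of a cubical complex is encoded by its link under the all-right spherical metric. Gromov's link condition asserts that $Y$ is locally CAT(0) if and only if each $Lk(y,Y)$ is a flag simplicial complex, in which case a small neighborhood of $y$ is isometric to an open metric cone on the CAT(1) piecewise-spherical complex $Lk(y,Y)$. Since $\Phi$ is cubical it sends cones to cones, so the content of the theorem is to match the link data with a local metric condition.

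For the forward direction, suppose $\Phi$ is a local isometry. Injectivity of the induced map on $Lk(x,X)$ is immediate, since distinct edges at $x$ cannot have the same image under a locally injective map. For the full subcomplex condition, take a simplex $\sigma \subseteq Lk(\Phi(x),Y)$ whose vertices all lie in the image of $Lk(x,X)$; this corresponds to a cube $C$ at $\Phi(x)$ whose edges incident to $\Phi(x)$ all lift to edges of $X$ at $x$. Because $\Phi$ is cubical and is an isometry on a neighborhood of $x$, a neighborhood of $x$ in $X$ must itself contain a cube lifting $C$, giving back the full simplex in $Lk(x,X)$.

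For the reverse direction, assume the link map at $x$ is injective with full image. A full subcomplex of a flag simplicial complex is still flag, so $Lk(x,X)$ is flag, and by Gromov's link condition $X$ is locally CAT(0) at $x$. The link map is a simplicial injection between complexes carrying the all-right spherical metric, so it is an isometric embedding on $1$-skeleta. The full subcomplex hypothesis promotes this to a local isometric embedding of the piecewise-spherical structures, in the sense that any geodesic segment in $Lk(\Phi(x),Y)$ of length less than $\pi$ with endpoints in $\im(Lk(x,X))$ stays inside $\im(Lk(x,X))$. Coning off, a small neighborhood of $x$ then maps isometrically onto its image in $Y$, which is precisely the local isometry condition at $x$.

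The main obstacle is the geometric lemma that makes the reverse direction work: a piecewise-spherical geodesic of length less than $\pi$ between two points of a full subcomplex $L'$ of a flag CAT(1) all-right spherical complex $L$ must stay inside $L'$. Simplicial fullness does not automatically translate into geodesic convexity, so one must analyse how a local geodesic can transition from one open simplex into an adjacent one and rule out a shortcut outside $L'$. Here the flag hypothesis is crucial: whenever a candidate geodesic tries to cut across a simplex $\tau \not\subseteq L'$, the flag condition forces the existence of a simplex inside $L'$ providing an alternative route of no greater length, contradicting the assumption that the original path was shortest.
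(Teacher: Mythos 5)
The paper does not actually prove this theorem; it cites it as exactly Theorem 1(2) of Crisp and Wiest and remarks that, since the hypotheses and conclusions are all local, their argument carries over unchanged from the finite-dimensional to the locally finite-dimensional setting. You are therefore attempting to prove outright what the paper merely invokes, and your outline is the structurally correct one (and matches what Crisp and Wiest do): reduce the local-isometry question at a vertex $x$ to the induced map on all-right spherical links, via the identification of a small ball about $x$ with a ball in the Euclidean cone on $Lk(x,X)$, and then translate ``local isometry of cones at the cone point'' into a condition on the link map. The forward direction is fine as a sketch.

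The genuine gap, which you yourself flag as the ``main obstacle,'' is that your reverse direction rests on the lemma that a full subcomplex $L'$ of an all-right spherical complex $L$ is $\pi$-convex: every $L$-geodesic of length less than $\pi$ with endpoints in $L'$ lies entirely in $L'$. This is essentially the whole analytic content of the theorem --- the forward direction and the cone reduction are elementary --- and your final paragraph does not prove it. Saying the flag condition ``forces the existence of a simplex inside $L'$ providing an alternative route of no greater length'' is a restatement of the conclusion, not an argument; the actual proof requires tracking how a local geodesic in the all-right spherical metric passes through a sequence of simplices and using fullness of $L'$ to show each simplex the geodesic meets already lies in $L'$. (Flagness, which you lean on here, is really what gives CAT(1)-ness of the link via Gromov's criterion; the $\pi$-convexity of full subcomplexes in the all-right metric is a separate lemma with its own proof.) Without that lemma the reverse direction, and hence the theorem, remains unproven.
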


\begin{proof}
This is exactly Theorem 1(2) from \cite{CW}, except that we allow locally finite-dimensional 
cubical complexes, rather than only finite-dimensional ones.  Since the hypotheses and conclusions
are all local in nature, the proof is unchanged.
\end{proof}

\begin{lemma} \label{lemma:fundamental} Let $X$ and $Y$ be locally finite-dimensional cubical complexes, 
let $Y$ be CAT(0), and assume that $\Phi: X \rightarrow Y$ is a cubical map with
the property that, for every vertex $x \in X$, the simplicial map $Lk(x,X) \rightarrow
Lk(\Phi(x),Y)$ induced by $\Phi$ is injective with image a full subcomplex of $Lk(\Phi(x),Y)$.

For every component $C \subseteq X$, we have:
\begin{enumerate}
\item $C$ is a CAT(0) cubical complex, and
\item $\Phi_{\mid C}$ is an isometric embedding.
\end{enumerate}
\end{lemma}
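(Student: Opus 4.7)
The plan is to apply Theorem 3.1 to deduce that $\Phi$ is a local isometry, and then to bootstrap this to a global isometric embedding on each component by combining the Cartan--Hadamard theorem with the fact that local geodesics in a CAT(0) space are global geodesics.

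First, the hypothesis is precisely the Crisp--Wiest link criterion, so Theorem 3.1 gives that $\Phi$ is a local isometry. Fix a component $C$ of $X$. Then $C$ is locally CAT(0), since local CAT(0)-ness transfers along local isometries from $Y$, and $C$ is complete because $X$ is locally finite-dimensional (see \cite{BH}, Exercise 7.62). Let $p: \widetilde{C} \rightarrow C$ be the universal cover, endowed with the pulled-back length metric. By the Cartan--Hadamard theorem for complete, connected, locally CAT(0) length spaces (\cite{BH}, Chapter II.4), $\widetilde{C}$ is CAT(0), and the composition $\widetilde{\Phi} = \Phi \circ p : \widetilde{C} \rightarrow Y$ is a local isometry between CAT(0) spaces.

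The heart of the argument is to show that $\widetilde{\Phi}$ is a \emph{global} isometric embedding. Given $\tilde{x}, \tilde{y} \in \widetilde{C}$, let $\gamma$ be the unique geodesic from $\tilde{x}$ to $\tilde{y}$ in $\widetilde{C}$. Since $\widetilde{\Phi}$ is a local isometry, $\widetilde{\Phi} \circ \gamma$ is a local geodesic in $Y$; because $Y$ is CAT(0), every local geodesic is a (unique) global geodesic, so $\widetilde{\Phi} \circ \gamma$ realizes the distance between its endpoints. Thus $d_{Y}(\widetilde{\Phi}(\tilde{x}), \widetilde{\Phi}(\tilde{y}))$ equals the length of $\widetilde{\Phi} \circ \gamma$, which equals the length of $\gamma$, which equals $d_{\widetilde{C}}(\tilde{x}, \tilde{y})$. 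Hence $\widetilde{\Phi}$ is an isometric embedding, and in particular injective.

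The injectivity of $\widetilde{\Phi} = \Phi \circ p$ forces $p$ to be injective as well, so $p$ is a homeomorphism, and therefore $C$ is simply connected. Hence $C$ itself is CAT(0), establishing (1), while (2) follows because $\Phi_{\mid C}$ is then identified with $\widetilde{\Phi}$. The only real subtlety I anticipate is guaranteeing that Cartan--Hadamard applies, which requires completeness of the induced length metric on $C$; this is exactly why the paper restricts attention to locally finite-dimensional cubical complexes.
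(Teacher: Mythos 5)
Your proof is correct and takes essentially the same route as the paper: both apply the Crisp--Wiest criterion (Theorem 3.1) to get a local isometry and then pass through the universal cover of each component to upgrade this to a global isometric embedding. The paper compresses your reconstruction of the Cartan--Hadamard theorem, the transfer of non-positive curvature along local isometries, and the local-to-global geodesic argument into a single citation of Proposition II.4.14 of Bridson--Haefliger, which packages exactly those facts (and in particular supplies the short argument behind your assertion that ``local CAT(0)-ness transfers along local isometries from $Y$,'' which is a little less automatic than it sounds since the image of a small ball under a local isometry is a priori only a subset, not a metric ball, of $Y$).
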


\begin{proof}
The previous theorem shows that $\Phi$ is a local isometry.  We note that
both $X$ and $Y$ are complete metric spaces, since both are locally finite-dimensional cubical complexes (see Exercise
7.62 on page 123 of \cite{BH}).  Since $Y$ is non-positively curved and $X$
is locally a length space, Proposition 4.14 from page 201 of \cite{BH} applies.  It follows that $X$ is non-positively curved,
the homomorphism $\Phi_{\ast}:\pi_{1}(C) \rightarrow \pi_{1}(Y)$ is injective, and every continuous lifting
$\widetilde{\Phi}: \widetilde{C} \rightarrow \widetilde{Y}$ is an isometric embedding.  Since $\Phi_{\ast}$
is injective, $C$ is simply connected, and therefore $C = \widetilde{C}$, $Y = \widetilde{Y}$, and $\widetilde{\Phi} = \Phi$.
The lemma follows.
\end{proof}

%%%%%%%%%%%%%%%%%%%%%%%%%%%%%
\section{The main theorem} \label{sec:main}
%%%%%%%%%%%%%%%%%%%%%%%%%%%%%%

%%%%%%%%%%%%%%%%%%%%%%%%%%%%%
\subsection{A preliminary version of Sageev's theorem}
%%%%%%%%%%%%%%%%%%%%%%%%%%%%%%%%

\begin{theorem}
If $X$ is a locally finite-dimensional cubical complex, then the map
$\pi_{\mathcal{B}}: \mathcal{B}(X) \rightarrow X$ embeds each connected
component of $\mathcal{B}(X)$ isometrically.
\end{theorem}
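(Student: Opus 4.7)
The plan is to invoke Lemma \ref{lemma:fundamental} with $\Phi = \pi_{\mathcal{B}}$ and $Y = X$. The entire proof then reduces to verifying the link hypothesis: for every vertex $\hat v$ of $\mathcal B(X)$, the simplicial map $Lk(\hat v, \mathcal B(X)) \to Lk(\pi_{\mathcal B}(\hat v), X)$ induced by $\pi_{\mathcal B}$ is injective with image a full subcomplex of $Lk(\pi_{\mathcal B}(\hat v), X)$.

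First I would classify the vertices of $\mathcal B(X)$ over a given vertex $v \in X$. For any marked cube $(v, C, [e])$, the marking $[e]$ together with $v$ singles out a unique directed edge $\overline e$ of $C$ with $v$ as an endpoint: the edge of $C$ at $v$ in the marked coordinate direction, with its inherited orientation. Inspecting the equivalence relation $\sim$ shows that $(v, C, [e]) \sim (v, \overline e, [\overline e])$ and that distinct directed edges at $v$ yield inequivalent classes, so the vertices of $\mathcal B(X)$ lying over $v$ correspond bijectively to directed edges of $X$ incident to $v$; write $\hat v_{\overline e}$ for the class determined by $\overline e$.

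The main step is to identify $Lk(\hat v_{\overline e}, \mathcal B(X))$ with a specific subcomplex of $Lk(v, X)$. A top-dimensional cube of $\mathcal B(X)$ at $\hat v_{\overline e}$ arises from a marked cube $(D, [f]) \in \mathcal M(X)$ with $(v, D, [f]) \sim (v, \overline e, [\overline e])$, which forces $\overline e \subseteq D$ and $[f]$ to be the unique marking of $D$ containing $\overline e$. Further cubes of $\mathcal B(X)$ at $\hat v_{\overline e}$ occur as faces of such $D_{[f]}/\!\sim$ perpendicular to $\overline e$; a key identification is that if cubes $D', D''$ both contain $\overline e$ and share a common face $F$, then $F$ as a face of $D'_{[f']}/\!\sim$ coincides with $F$ as a face of $D''_{[f'']}/\!\sim$, since the common directed edge $\overline e$ lies in both markings. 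Together these observations give a bijective simplicial correspondence between cubes of $\mathcal B(X)$ at $\hat v_{\overline e}$ and simplices $\sigma$ of $Lk(v, X)$ such that $\sigma \cup \{\overline e\}$ is again a simplex; equivalently, the link map is an isomorphism from $Lk(\hat v_{\overline e}, \mathcal B(X))$ onto the closed star of $\overline e$ in $Lk(v, X)$.

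Injectivity is then immediate. For fullness, I must show that any simplex $\sigma$ of $Lk(v, X)$ whose vertex set lies in $\{\overline e\}$ together with the neighbors of $\overline e$ satisfies $\sigma \cup \{\overline e\} \in Lk(v, X)$. Every pair of vertices in $\sigma \cup \{\overline e\}$ spans an edge of $Lk(v, X)$ (pairs within $\sigma$ because $\sigma$ is a simplex; pairs involving $\overline e$ because each other vertex is either $\overline e$ or one of its neighbors). Since $X$ is CAT(0), Gromov's link condition guarantees that $Lk(v, X)$ is a flag simplicial complex, so $\sigma \cup \{\overline e\}$ is a simplex; Lemma \ref{lemma:fundamental} then concludes the proof. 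The main obstacle, I expect, is the face-counting in the previous paragraph: verifying that perpendicular faces appearing in different parent cubes are actually identified by $\sim$, so that the link at $\hat v_{\overline e}$ surjects onto the entire closed star rather than some smaller subcomplex. Once this bookkeeping is in place, the flag-complex argument for fullness is routine.
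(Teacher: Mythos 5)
Your argument follows the paper's proof essentially verbatim: both classify the vertices of $\mathcal{B}(X)$ lying over a vertex $v\in X$ by directed edges at $v$, both identify the induced map on links with an isomorphism onto the closed star (simplicial neighborhood) of the vertex of $Lk(v,X)$ determined by the underlying $1$-cell, and both deduce fullness from the flag property of $Lk(v,X)$, then conclude via Lemma \ref{lemma:fundamental}. The bookkeeping you flag as the main potential obstacle -- that faces of different parent cubes containing the distinguished directed edge really are glued by $\sim$ -- is exactly what the paper handles by observing that for any cube $C\supseteq C'$ there is a unique marking containing $e'$, so intersections of marked cubes map isometrically onto intersections in $X_{C'}$; your reasoning is correct and the proof is sound.
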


\begin{proof}
By Lemma \ref{lemma:fundamental}, we need only show that the simplicial map
on links $Lk(v, \mathcal{B}(X)) \rightarrow Lk(\pi_{\mathcal{B}}(v), X)$ is injective
with image a full subcomplex of $Lk(\pi_{\mathcal{B}}(v),X)$.

We choose a vertex $v \in \mathcal{B}(x)$.  Such a vertex can be represented by a vertex
$(x,C,[e])$ in $\mathcal{M}(X)$, where $x \in X^{0}$.  There is a unique directed edge
$e' \in [e]$ containing $x$.  We let $C'$ denote the (undirected) $1$-cell determined by $e'$.
It follows from the definition of $\sim$ that we can represent $v$ by $(x,C',[e'])$.

We let $X_{C'}$ be the subcomplex of $X$ consisting of all closed cubes $C$ such that $C' \subseteq C$.
A marked cube $C_{[e]} \subseteq \mathcal{B}(X)$ touches $v$ if and only if $C' \subseteq C$ and $e' \in [e]$, by the
definition of $\sim$.  Now, for a given cube $C \subseteq X$ such that $C' \subseteq C$, 
there is a unique marking $[e]$ of $C$ such that $e' \in [e]$.  It follows that 
the closed cubes touching $v$ in $\mathcal{B}(X)$ are in one-to-one correspondence with the closed cubes of $X_{C'}$ touching $\pi_{\mathcal{B}}(v)$.
Moreover, given two marked cubes $D_{[e_{1}]}$ and $E_{[e_{2}]}$ such that $e' \in [e_{1}] \cap [e_{2}]$, the intersection
$D_{[e_{1}]} \cap E_{[e_{2}]}$ is mapped isometrically to $D \cap E$ by $\pi_{\mathcal{B}}$, since $D_{[e_{1}]} \cap E_{[e_{2}]} = (D \cap E)_{[e_{3}]}$,
where $[e_{3}]$ is the unique marking of $D \cap E$ determined by the property $[e_{3}] \subseteq [e_{1}] \cap [e_{2}]$. 
It follows that the union of all closed cubes in $\mathcal{B}(X)$ touching
$v$ is combinatorially identical to $X_{C'}$, and the map $\pi_{\mathcal{B}}: \mathcal{B}(X) \rightarrow X$
is locally just the inclusion $X_{C'} \rightarrow X$.  Therefore, the map on links is
injective.

We now consider the image in $Lk(\pi_{\mathcal{B}}(x),X)$.  There is a vertex $v' \in Lk(\pi_{\mathcal{B}}(v),X)$
which is contributed by the $1$-cell $C'$.  The above description of $\pi_{\mathcal{B}}$ implies that
the image of the link $Lk(v, \mathcal{B}(X))$ is the union of all simplices touching $v'$ (i.e., the simplicial
neighborhood of $v'$).  Since $Lk(\pi_{\mathcal{B}}(v),X)$ is flag, this simplicial neighborhood is necessarily
a full subcomplex.
\end{proof}

%%%%%%%%%%%%%%%%%%%%%%%%%%%%%%%%%%%%%%%
\subsection{Sageev's theorem}
%%%%%%%%%%%%%%%%%%%%%%%%%%%%%%%%%

\begin{definition} \label{def:height}
Fix a component $B$ of the block  complex $\mathcal{B}(X)$.  For each marked cube $C$ of $B$, choose an
isometric characteristic map $c: [0,1]^{n} \rightarrow C$ such that the directed edge $[c(0,0,\ldots,0), c(0,0,\ldots,0,1)]$
represents a marking of $C$.  
If $x \in C$ satisfies $x = c(t_{1}, \ldots, t_{n})$, then the \emph{height} of $x$, denoted $h(x)$, is $t_{n}$.
This height function on marked cubes is easily seen to be compatible overlaps, and induces a height function
$h: B \rightarrow [0,1]$.  We let $B_{t} = h^{-1}(t)$ for $t \in [0,1]$.
\end{definition}

\begin{lemma} \label{lemma:blocks}
\begin{enumerate}
\item For any component $B$ of $\mathcal{B}(X)$ and for any $t \in [0,1]$, $B_{t}$ is a closed convex subset of
$\mathcal{B}(X)$.  The space $\pi_{\mathcal{B}}(B_{t})$ is a closed convex subset of $X$.
\item Each component $B$ of $\mathcal{B}(X)$ factors isometrically as $B_{0} \times [0,1]$.
\item Each $B_{t}$ ($t \in [0,1]$) is a CAT(0) cubical complex.
\end{enumerate}
\end{lemma}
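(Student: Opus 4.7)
The plan is to prove part (2) first and then derive parts (1) and (3) from the resulting product decomposition.

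For (2), I observe that every marked cube $C_{[e]}$ of $\mathcal{B}(X)$ carries a canonical product structure $C_{[e]} \cong \overline{C} \times [0,1]$ in which the $[0,1]$ factor is the marking direction: using the characteristic map $c$ of Definition \ref{def:height}, take $\overline{C} = c([0,1]^{n-1} \times \{0\})$, so that projection to the second factor is precisely $h|_{C_{[e]}}$. The central step is to check that these local product structures are compatible on overlaps within a single block $B$. If two marked cubes $C_{[e_1]}$ and $D_{[e_2]}$ of $B$ meet along a face $F$, then by the definition of $\sim$ there is a directed edge $e \in [e_1] \cap [e_2]$ lying in $F$; this forces $F$ to split as $\overline{F} \times [0,1]$ in a way compatible with both of the product decompositions of $C$ and $D$. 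In particular, the local height functions agree on $F$ (so $h$ is well-defined on all of $B$) and the local bottoms $\overline{C}$, $\overline{D}$ glue along $\overline{F}$. Assembling these data globally, the local bottom projections define a cubical subcomplex $B_0 \subseteq B$ together with a cubical isomorphism $\Psi: B \to B_0 \times [0,1]$ sending $x$ to the pair (local bottom of $x$, $h(x)$); being a cubical isomorphism, $\Psi$ is an isometry for the induced path metrics, giving the required product decomposition.

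For (3), the decomposition exhibits $B_t$ as the slice $B_0 \times \{t\}$, so $B_t$ is isometric to $B_0$ and inherits a cubical structure from $B$. Since $B$ is CAT(0) by Lemma \ref{lemma:fundamental}(1) and $B_0 \times \{0\}$ is a convex subcomplex of the product $B_0 \times [0,1]$, it is CAT(0), and hence so is each $B_t$.

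For (1), $B_t$ is closed in $\mathcal{B}(X)$ by continuity of $h$ (and closedness of the component $B$), and convex in $\mathcal{B}(X)$ --- equivalently in $B$, since $B$ is a connected component --- because $B_0 \times \{t\}$ is convex in $B_0 \times [0,1]$. To prove that $\pi_{\mathcal{B}}(B_t)$ is convex in $X$: given $p, q \in \pi_{\mathcal{B}}(B_t)$, injectivity of $\pi_{\mathcal{B}}|_B$ provides unique lifts $p_B, q_B \in B_t$; the geodesic in $B$ from $p_B$ to $q_B$ stays in $B_t$ by convexity, and its image under the isometric embedding $\pi_{\mathcal{B}}|_B$ is a geodesic in $X$ from $p$ to $q$ lying entirely in $\pi_{\mathcal{B}}(B_t)$; uniqueness of CAT(0) geodesics in $X$ then identifies this as \emph{the} geodesic between $p$ and $q$. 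Closedness of $\pi_{\mathcal{B}}(B_t)$ in $X$ follows because $B_t$ is closed in the complete metric space $B$, hence itself complete, and the image of a complete space under an isometric embedding is closed. The main obstacle is the gluing in part (2): verifying that the local product structures on marked cubes are mutually compatible throughout all of $B$, not merely on adjacent pairs. This reduces to the marking-preservation implicit in the definition of $\sim$ --- essentially the same compatibility idea that powers Lemma \ref{lemma:transitive} --- and once it is in place, the remaining parts of the lemma fall out quickly.
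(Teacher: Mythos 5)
Your proposal is correct but proceeds in a different order and with different machinery than the paper. The paper proves part (1) first by building local projections $\pi_t\colon C'\times[0,1]\to C'\times\{t\}$ on each marked cube and observing these glue to a $1$-Lipschitz retraction $\pi_t\colon B\to B_t$, from which convexity of $B_t$ follows by uniqueness of CAT(0) geodesics; it then proves (2) by a purely metric argument, taking the nearest-point projection $\pi_0\colon B\to B_0$ given by Proposition 2.4 of \cite{BH} and invoking the Flat Quadrilateral Theorem (2.11 of \cite{BH}) to show that the map $x\mapsto(\pi_0(x),h(x))$ is an isometry onto the $\ell^2$-product. Your route inverts this: you establish (2) combinatorially, by showing that the cube-by-cube decompositions $C_{[e]}\cong\overline{C}\times[0,1]$ are compatible across the gluings of $\mathcal{B}(X)$ (which follow from the definition of $\sim$, since identifications are always along marked faces and hence respect the vertical factor), so that $B$ is globally a cubical product $B_0\times[0,1]$; you then read off (1) and (3) from this. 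Your approach is more elementary in that it avoids the Flat Quadrilateral Theorem entirely and stays at the level of cubical combinatorics; the trade-off is that the assertion that the local decompositions assemble into a global cubical isomorphism $\Psi\colon B\to B_0\times[0,1]$ deserves a bit more care than you give it --- in particular, injectivity of $\Psi$ requires checking that distinct marked cubes in $B$ cannot share the same bottom face (this does hold: a common bottom vertex would force the markings to share a directed edge, yet any such edge points out of the bottom and so cannot lie in the intersection when the two cubes of $X$ differ). That said, the paper's own proof of part (1) invokes the analogous compatibility of the projections $\pi_t$ with comparable brevity, so you are at roughly the same level of rigor. Both approaches also both rely, once the product structure is in hand, on the standard fact that the path metric on a product cubical complex is the $\ell^2$-product metric.
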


\begin{proof}
\begin{enumerate}
\item It is clear that $B_{t}$ is closed.  

Suppose that $x,y \in B_{t}$.  Let $p: [0, d_{B}(x,y)] \rightarrow B$ be a
path connecting $x$ to $y$.  We can factor each marked cube $C \subseteq B$
of dimension $n$ as $C' \times [0,1]$, where $C'$ is a cube of dimension $n-1$
and the factor $[0,1]$ determines the marking.  There is a natural projection
$\pi_{t}: C \rightarrow C' \times \{ t \}$, and this projection doesn't increase
distances.  Moreover, all such projections are compatible, so in particular
there is a projection $\pi_{t}: B \rightarrow B_{t}$ which fixes $B_{t}$ and
doesn't increase distances.  It follows that $\pi_{t} \circ p$ is a path
in $B_{t}$ which is no longer than $p$.  By the uniqueness of geodesics in CAT(0)
spaces, it follows that any geodesic connecting $x$ to $y$ lies in $B_{t}$.
Therefore, $B_{t}$ is a closed convex subset of $\mathcal{B}(X)$.  Since
$\pi_{\mathcal{B}\mid B}$ is an isometric embedding, $\pi_{\mathcal{B}}(B_{t})$ is
a closed convex subset of $X$.

\item There is a natural map $f: B \rightarrow B_{0} \times [0,1]$, where
$f(x) = (\pi_{0}(x),h(x))$ and $\pi_{0}: B \rightarrow B_{0}$ is the usual
projection onto the closed convex subspace $B_{0}$ (see Proposition 2.4 on page 176 of \cite{BH}).

Assume that $x,y \in B$.  We need to show that
$$ d_{B}(x,y) = \sqrt{ [d_{B_{0}}( \pi_{0}(x), \pi_{0}(y))]^{2} + |h(x) - h(y)|^{2}}.$$
This is clear if $\pi_{0}(x) = \pi_{0}(y)$.  If $\pi_{0}(x) \neq \pi_{0}(y)$, then we consider
the quadrilateral formed by the geodesic segments 
$[\pi_{0}(x), \pi_{0}(y)]$, $[\pi_{0}(x), \pi_{1}(x)]$, $[\pi_{1}(x), \pi_{1}(y)]$, and
$[\pi_{1}(y), \pi_{0}(y)]$.

By Proposition 2.4(3) of \cite{BH}, each of the four resulting Alexandrov angles
measures at least $\pi / 2$.  It therefore follows from the Flat Quadrilateral Theorem
(2.11 from page 181 of \cite{BH}) that all of the angles in the above quadrilateral measure exactly
$\pi /2$, and that the convex hull of $\pi_{0}(x)$, $\pi_{0}(y)$, $\pi_{1}(x)$ and $\pi_{1}(y)$ in $B$
is isometric to a rectangle in Euclidean space.  The desired equality now follows from the definition
of the metric in Euclidean space.
\item It is sufficient to prove this for $B_{0}$.  Since $B = B_{0} \times [0,1]$ is CAT(0),
it must be that each factor is CAT(0) (Exercise 1.16, page 168 of \cite{BH}).  The space $B_{0}$ is a
cubical complex because the identifications in the definition of $B$ are height-preserving.
\end{enumerate}
\end{proof}

\begin{theorem} \label{thm:biggie}
Each hyperplane $\pi_{\mathcal{B}}(B_{t})$ $(0<t<1)$ separates $X$ into two open convex complementary
half-spaces.
\end{theorem}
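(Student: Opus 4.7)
The plan is to construct two disjoint open convex subsets $U^+, U^- \subseteq X \setminus H$ with $U^+ \cup U^- = X \setminus H$. Let $B$ be the component of $\mathcal{B}(X)$ with $\pi_{\mathcal{B}}(B_t) = H$, and write $B = B_0 \times [0,1]$ as in Lemma \ref{lemma:blocks}. Set $X^+ = \pi_{\mathcal{B}}(B_0 \times (t,1])$ and $X^- = \pi_{\mathcal{B}}(B_0 \times [0,t))$; these are images of convex subsets of $B$ under the isometric embedding $\pi_{\mathcal{B}\mid B}$, so they are convex in $X$. They need not cover $X \setminus H$, however, since $\pi_{\mathcal{B}}(B)$ is only a ``tubular neighborhood'' of $H$, so I will extend them via a continuous sign function $\sigma: X \setminus H \to \{+,-\}$ encoding the direction of approach to $H$, and take $U^+ = \sigma^{-1}(+)$, $U^- = \sigma^{-1}(-)$.

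Concretely, for $y \in X \setminus H$ let $p = \pi_H(y)$ be the unique nearest point of $H$ to $y$, and let $\tilde{p} \in B_t$ be the unique preimage of $p$ under $\pi_{\mathcal{B}\mid B}$. The geodesic $\gamma$ from $p$ to $y$ meets $H$ only at $p$, and the local isometry $\pi_{\mathcal{B}\mid B}$ near $\tilde{p}$ lifts an initial arc of $\gamma$ to a geodesic in $B = B_0 \times [0,1]$ starting at $\tilde{p}$. Since $B$ is a metric product, the height is affine along this lift; its slope must be nonzero, for otherwise the arc would lie in $B_t$, forcing an initial subarc of $\gamma$ to lie in $H$. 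I define $\sigma(y)$ to be the sign of this slope. A direct calculation in $B$ shows $X^+ \subseteq U^+$ and $X^- \subseteq U^-$, so both halfspaces are nonempty for $0 < t < 1$. Continuity of $\sigma$ (hence openness of the $U^\pm$) follows because $\pi_H$ is $1$-Lipschitz, the lift $\tilde{p}$ depends continuously on $p$, CAT(0) geodesics vary continuously with their endpoints, and the height-slope of the local lift therefore varies continuously with $y$; since this slope is everywhere nonzero, its sign is locally constant.

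The main step is convexity of $U^+$ (and symmetrically $U^-$), and I expect this to be the main obstacle. Take $y_1, y_2 \in U^+$, let $\gamma: [0, L] \to X$ be the geodesic between them, and let $T = \gamma^{-1}(H)$. Since $H$ is convex in $X$, $T$ is convex in $[0, L]$, hence an interval. I first rule out that $T$ contains a nondegenerate subinterval: if $\gamma$ lay in $H$ on some $[a, a + \delta]$ with $a$ interior, then the local lift of $\gamma$ to a $B$-geodesic near $\tilde{\gamma}(a)$ would have affine height identically $t$ on $[a, a + \delta]$, hence identically $t$ on the whole local lift, contradicting $\gamma(a - \epsilon) \notin H$. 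So $T$ is empty or a single interior point $s_0$. In the latter case, the affineness of the local height shows $\gamma$ crosses $H$ transversally, so $\gamma(s_0 - \epsilon)$ and $\gamma(s_0 + \epsilon)$ lie in opposite sets $X^{\mp}$ and $X^{\pm}$; by continuity of $\sigma$ along each half of $\gamma$, together with the inclusions $X^\pm \subseteq U^\pm$, this forces $\sigma(y_1) \neq \sigma(y_2)$, contradicting $y_1, y_2 \in U^+$. Hence $T = \emptyset$, $\gamma$ avoids $H$, $\sigma \circ \gamma$ is the constant value $+$ on the connected set $[0, L]$, and $\gamma \subseteq U^+$.
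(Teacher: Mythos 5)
Your argument is correct, but it follows a genuinely different route from the paper's. The paper composes the nearest-point projection $\pi\colon X\to\pi_{\mathcal{B}}(B)$ (onto the closed convex block) with the height function $h$ and shows that $h\circ\pi$ attains its maximum and minimum along any geodesic at the endpoints; the halfspaces are then simply the sublevel and superlevel sets $(h\circ\pi)^{-1}([0,t))$ and $(h\circ\pi)^{-1}((t,1])$, and convexity, openness, and complementarity all drop out immediately. You instead project onto $H$ itself, transport the germ of the geodesic $[\pi_H(y),y]$ into $B\cong B_0\times[0,1]$, and take the sign of the initial height-derivative as a $\{+,-\}$-valued function $\sigma$ whose fibers are the halfspaces. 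Both proofs rest on the same pillars --- the product decomposition of the block, convexity of the slices $B_{t'}$, and the fact that a point of height in $(0,1)$ is interior to $\pi_{\mathcal{B}}(B)$ (which you use implicitly every time you ``lift'' near a point of $H$, and which the paper states explicitly as claim (2)) --- so the depth is comparable. The paper's version is tidier: packaging everything into the single real-valued continuous map $h\circ\pi$ lets it invoke the Intermediate Value Theorem once and be done, whereas your pointwise-defined $\sigma$ forces you to carry the more delicate continuity argument (uniform radius of interiority around $\pi_H(y)$, continuous dependence of geodesics, etc.) and to argue separately that the slope is nonzero. Your argument would simplify considerably if you observed that $\sigma(y)$ agrees with $\operatorname{sign}(h(\pi(y))-t)$: on $\pi_{\mathcal{B}}(B)$ this is immediate from the product structure, and off $\pi_{\mathcal{B}}(B)$ the projection $\pi(y)$ has height $0$ or $1$; with that identification the two proofs nearly coincide. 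One expository nit: your case analysis ``$T$ contains a nondegenerate subinterval'' with ``$a$ interior'' should note that the endpoints $0,L$ cannot lie in $T$ because $y_1,y_2\notin H$, so any endpoint of $T$ is automatically interior to $[0,L]$.
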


\begin{proof}
We recall that $\pi_{\mathcal{B}}(B)$ is a closed convex subspace of $X$.  We let 
$\pi: X \rightarrow \pi_{\mathcal{B}}(B)$ be the projection.  By a slight abuse of notation,
we let $h: \pi_{\mathcal{B}}(B) \rightarrow [0,1]$ denote the height function from Definition
\ref{def:height}.

Consider the function $h \circ \pi : X \rightarrow [0,1]$.  We claim
\begin{enumerate}
\item if $[x,y]$ is any geodesic in $X$, then $(h \circ \pi)_{\mid [x,y]}$ must assume
its maximum and minimum values at the endpoints, and
\item if $h(\pi(x)) \in (0,1)$, then $x = \pi(x)$.
\end{enumerate}

We prove (2) first.  Note that, if $h(\pi(x)) \in (0,1)$, then
$\pi(x)$ is an interior point of $\pi_{\mathcal{B}}(B)$.  This is
only possible if $\pi(x) = x$.

We now prove (1).  We assume the contrary.  Assume that $h \circ \pi$ attains its maximum value
on the geodesic $[x,y]$ at neither of the endpoints.  (The case in which $h \circ \pi$ attains
its minimum value at neither of the endpoints is handled in an analogous way.)  We assume
that $h \circ \pi$ attains its maximum value at $z \in [x,y]$, $z \not \in \{ x, y \}$.  It follows
that there is some $t \in (0,1)$ such that 
$$\mathrm{max}\{ (h \circ \pi)(x), (h \circ \pi)(y)\} < t < (h \circ \pi)(z).$$
This implies, by the Intermediate Value Theorem, that there are points $x'$, $y'$
such that $(h \circ \pi)(x') = t = (h \circ \pi)(y')$, where $x'$ lies between $x$ and $z$
on $[x,y]$, and $y'$ lies between $y$ and $z$.  It now follows, from (2), that $x', y' \in B_{t}$.
Since $z \in [x', y'] \subseteq B_{t}$, $(h \circ \pi)(z) = t$, a contradiction.  This proves (1).

We now prove the theorem.  Consider the sets $(h \circ \pi)^{-1}([0,t)) = B_{t}^{-}$ and
$(h \circ \pi)^{-1}((t,1]) = B_{t}^{+}$.  For any $x,y \in B_{t}^{-}$, the geodesic $[x,y]$
is clearly contained in $B_{t}^{-}$ by (1).  It follows that $B_{t}^{-}$ is convex and (therefore)
connected.  By similar reasoning $B_{t}^{+}$ is convex and connected.  Both $B_{t}^{-}$ and
$B_{t}^{+}$ are obviously open, and they are disjoint.  We note finally that
$B_{t}^{-} \cup B_{t}^{+} = X - \pi_{\mathcal{B}}(B_{t})$ (since $\pi_{\mathcal{B}}(B_{t}) =
(h \circ \pi)^{-1}(t)$, by (2)), completing the proof.  
\end{proof}

\begin{definition} \label{def:hyperplane}
A \emph{hyperplane} $H$ in a CAT(0) cubical complex $X$ is the image $\pi_{\mathcal{B}}(B_{1/2})$, where
$B$ is a connected component of $\mathcal{B}(X)$.  We sometimes denote the complementary halfspaces
 $H^{+}$ and $H^{-}$. 
\end{definition}

\begin{note}  
In what follows, we typically identify $\pi_{\mathcal{B}}(B)$ with $B$, and $\pi_{\mathcal{B}}(B_{t})$ with $B_{t}$, 
for the sake of convenience in notation.
\end{note}

%%%%%%%%%%%%%%%%%%%%%%%%%%%%%%%%%%%%%%%%%%
\section{Combinatorics of Hyperplanes} \label{sec:combinatorial}
%%%%%%%%%%%%%%%%%%%%%%%%%%%%%%%%%%%%%%%%%

\begin{definition}
Let $X$ be a complete CAT(0) space.  If $C$ is a closed convex subset of $X$, then $\pi_{(X,C)}$ denotes the projection
from $X$ to $C$.  If $x_{1}$, $x_{2}$, and $x_{3}$ are points in $X$, then $\angle^{X}_{x_{2}}(x_{1},x_{3})$ denotes the Alexandrov (or upper)
angle in $X$ between the geodesics $[x_{2}, x_{1}]$ and $[x_{2}, x_{3}]$.  We refer the reader to \cite{BH} for the definitions, which appear on
pages 176 and 9, respectively.
\end{definition}   

%%%%%%%%%%%%%%%%%%%%%%%%%%%%%%%%%%%%%%
\subsection{Three Lemmas} \label{subsec:lemmas}
%%%%%%%%%%%%%%%%%%%%%%%%%%%%%%%%%%%%

\begin{lemma} \label{lemma:projection}
Let $H_{1}$, $H_{2}$ be hyperplanes in $X$, and assume that $H_{1} \cap H_{2} \neq \emptyset$.  The
projections $\pi_{(X,H_{i})} : X \rightarrow H_{i}$ and 
$\pi_{(X, H_{i} \cap H_{j})}: X \rightarrow H_{i} \cap H_{j}$ agree on $H_{j}$, where $\{ i, j \} = \{ 1, 2 \}$.
\end{lemma}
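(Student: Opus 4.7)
The plan is to reduce to showing, for $x \in H_j$, that $y := \pi_{(X, H_i)}(x)$ lies in $H_j$; granted this, $y$ lies in the closed convex set $H_i \cap H_j \subseteq H_i$, so uniqueness of the CAT(0) projection forces $y = \pi_{(X, H_i \cap H_j)}(x)$. If $x \in H_i$ already then $y = x \in H_j$ trivially, so I may assume $x \notin H_i$, whence $y \neq x$ and $[x, y] \cap H_i = \{y\}$ (the geodesic from $x$ to its projection on $H_i$ meets $H_i$ only at the projection).

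To prove $y \in H_j$, I would introduce $w := \pi_{(X, H_i \cap H_j)}(x) \in H_i \cap H_j$ and analyze the geodesic triangle $xyw$. Proposition 2.4(3) of \cite{BH} applied to $\pi_{(X, H_i)}$ yields $\angle_y(x, w) \geq \pi/2$ since $w \in H_i$. The composition law $\pi_{(X, H_i \cap H_j)} = \pi_{(H_i, H_i \cap H_j)} \circ \pi_{(X, H_i)}$ for CAT(0) projections onto nested closed convex subsets gives $w = \pi_{(H_i, H_i \cap H_j)}(y)$; symmetrically, using $x \in H_j$, $w = \pi_{(H_j, H_i \cap H_j)}(x)$. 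These yield $\angle_w(y, z) \geq \pi/2$ for every $z \in H_i \cap H_j$ (Alexandrov angles in the convex subspace $H_i$ agreeing with those in $X$), and analogously $\angle_w(x, z) \geq \pi/2$ for every $z \in H_i \cap H_j$ from the $H_j$-computation. The key step---my principal obstacle---is to combine these two one-sided perpendicularity statements into the full perpendicularity $\angle_w(x, y) \geq \pi/2$: both $[w, x]$ and $[w, y]$ are normal to $H_i \cap H_j$ at $w$, but within different ambient hyperplanes, and I want them perpendicular to each other in $X$.

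Granted $\angle_w(x, y) \geq \pi/2$, the triangle $xyw$ has two angles of at least $\pi/2$, so the CAT(0) angle-sum bound of $\pi$ forces the third angle (at $x$) to vanish and the triangle to degenerate: $[x, y]$ and $[x, w]$ emanate from $x$ in the same direction, so either $y \in [x, w]$ or $w \in [x, y]$. In the first case, $y \in [x, w] \subseteq H_j$ by convexity of $H_j$ (since $x, w \in H_j$). In the second case, $w \in [x, y] \cap H_i = \{y\}$, so $w = y$ and hence $y \in H_i \cap H_j \subseteq H_j$. Either way, $y \in H_j$. My plan for establishing the outstanding perpendicularity is a computation in the spherical link $Lk(w, X)$: the two normal directions $[w, x]$ and $[w, y]$ lie in the polar complements of $Lk(w, H_i \cap H_j)$ within $Lk(w, H_j)$ and $Lk(w, H_i)$ respectively, and Gromov's link condition applied to the flag complex $Lk(w, X)$ should force their spherical distance to be at least $\pi/2$, as it is in the Euclidean model where these two directions are standard basis vectors.
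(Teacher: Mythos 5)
Your plan has a genuine gap, and in fact two, one of which you correctly flag yourself. First, the ``composition law'' $\pi_{(X,\,H_i\cap H_j)} = \pi_{(H_i,\,H_i\cap H_j)}\circ\pi_{(X,H_i)}$ is \emph{not} a general fact about nested closed convex subsets of a CAT(0) space (it already fails for a disk inside a half-plane in $\mathbb{R}^2$). It holds for nested affine subspaces of Euclidean space, but for general convex sets it is a strong compatibility statement. In the present setting it is essentially equivalent to the lemma you are trying to prove: once $y = w$ is known the identity is trivial, and without it the identity is exactly as hard as the statement. So the step $w = \pi_{(H_i,\,H_i\cap H_j)}(y)$ cannot be cited; it would need its own argument. (The other use of the law, $w = \pi_{(H_j,\,H_i\cap H_j)}(x)$, is fine, but only because $x$ and $H_i\cap H_j$ both already lie in the convex set $H_j$.) Second, the perpendicularity $\angle_w(x,y)\ge\pi/2$, which you single out as your principal obstacle, remains unresolved: $w$ need not be a vertex, so $Lk(w,X)$ is not one of the flag links governed by Gromov's criterion, and even at a vertex, ``the two directions lie in the polar complements of $Lk(w,H_i\cap H_j)$ in $Lk(w,H_j)$ and $Lk(w,H_i)$'' does not by itself pin down their mutual spherical distance. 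In short, the two one-sided perpendicularity facts you have do not assemble into the pairwise perpendicularity you need without knowing the local product structure near $w$ in advance.

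The paper's proof takes a different route and sidesteps both issues by working inside the block complex. It sets $\pi := \pi_{(B,\,B\cap H_2)}$ where $B$ is the block containing $H_1$, chooses a marked cube $C = C'\times[0,1]\times[0,1]$ containing $\pi(x)$ whose two interval factors are dual to $H_2$ and $H_1$, and uses the isometric product decomposition $B \cong B_0\times[0,1]$ from Lemma \ref{lemma:blocks}(2) to compute distances explicitly and conclude $\pi(x)\in H_1\cap H_2$. It then upgrades $\pi$ to the ambient projections via the $\angle\ge\pi/2$ characterization (as you do) and a short nearest-point argument. The product structure of the block is precisely the local rigidity your link computation would have to recover from scratch; the block-complex machinery hands it to you, which is why the paper's argument closes where yours is forced to stop.
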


\begin{proof}
For the sake of simplicity, we let $j=1$ and $i=2$.  Choose a point $x \in H_{1}$.  We consider the block $B$
containing $H_{1}$, and the projection $\pi_{(B, B \cap H_{2})}: B \rightarrow B \cap H_{2}$.  We denote
the latter projection by $\pi$.  Let $C$ be a marked cube of $B$ containing $\pi(x)$.  We note that $C$ must be at least
two-dimensional, since $C$ meets at least two hyperplanes.
We write $C = C' \times [0,1] \times [0,1]$, where $C' \times \{ 1/2 \} \times [0,1] = H_{2} \cap C$ and
$C' \times [0,1] \times \{ 1/2 \} = H_{1} \cap C$.  

We claim that $\pi(x) \in H_{1}$ (i.e., $\pi(x) \in H_{1} \cap H_{2}$, since $\pi(x) \in H_{2}$ by definition).
Express $\pi(x)$ as $(y, 1/2, t) \in C' \times [0,1] \times [0,1] = C$.  Now since $x \in B_{1/2} = H_{1}$, we have,
by the product decomposition of $B$ (Lemma \ref{lemma:blocks}(2)), 
$$ d(x, \pi(x)) = \sqrt{ D^{2} + |t - 1/2|^{2}},$$
where $D$ is the distance from $x$ to $(y,1/2,1/2)$.  Since $(y,1/2,1/2) \in H_{2} \cap B$
and $\pi(x)$ is the point of $B \cap H_{2}$ closest to $B$, we must have $t = 1/2$.  That is, $\pi(x) = (y,1/2,1/2)$, so
$\pi(x) \in H_{1}$, as claimed.

Next, we claim that $\pi(x) = \pi_{(X,H_{2})}(x)$.  The proof of this fact uses the following characterization of the projection:
if $X$ is a complete CAT(0) space, $C$ is a closed convex subset of $X$, and $x \in X-C$, then $\pi_{(X,C)}(x)$ is the unique element
of $C$ with the property that $\angle^{X}_{\pi_{(X,C)}(x)}(x,z) \geq \pi/2$ 
for all $z \in C - \pi_{(X,C)}(x)$.  We choose $z \in H_{2} - \{ \pi(x) \}$.  Since
$\pi(x)$ is in the interior of $B$, there is some $z' \in [ \pi(x), z]$, $z' \neq \pi(x)$, such that $z' \in B \cap H_{2}$.  By the
definition of $\pi(x) = \pi_{(B, B \cap H_{2})}(x)$, $\angle^{B}_{\pi(x)}(x,z) \geq \pi/2$.  Since $B$ is a convex subset of $X$,
$\angle^{B}_{\pi(x)}(x,z) = \angle^{X}_{\pi(x)}(x,z')$.  It now follows that
$$ \angle^{X}_{\pi(x)}(x,z) = \angle^{X}_{\pi(x)}(x,z') \geq \pi/2,$$
so $\pi(x) = \pi_{(X,H_{2})}(x)$.

Now we argue that $\pi(x) = \pi_{(X, H_{1} \cap H_{2})}(x)$.  If not, then there is $y \in H_{1} \cap H_{2}$ such that
$d_{X}(x,y) < d_{X}(x, \pi(x))$.  This is impossible, however, since $\pi(x)$ is the closest point in $H_{2}$ to $x$.
\end{proof}

\begin{lemma} \label{lemma:intersection}
Assume that $H_{1}$ and $H_{2}$ are hyperplanes, $H_{1} \neq H_{2}$, and $H_{1} \cap H_{2} \neq \emptyset$.  If $e$ is a marked edge perpendicular
to $H_{1}$, then $d_{H_{2}|_{e}}$ is constant.
\end{lemma}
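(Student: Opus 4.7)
The plan is to exploit the product decomposition of the block $B \supseteq H_1$ in order to reduce $d(\cdot, H_2)$ on $e$ to a quantity that is manifestly independent of the $B$-height.

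First I set up coordinates. By Lemma \ref{lemma:blocks}(2), $B$ splits as $B_0 \times [0,1]$ with $H_1 = B_0 \times \{1/2\}$, and the hypothesis that $e$ is marked and perpendicular to $H_1$ forces $e = \{v\} \times [0,1]$ for some vertex $v \in B_0$. The structural statement I need is that, in these coordinates,
\[ H_2 \cap B \;=\; K \times [0,1], \qquad K := H_1 \cap H_2, \]
where $K$ is viewed as a subset of $B_0$ via the isometry $H_1 = B_0 \times \{1/2\} \cong B_0$. I expect this to be the main obstacle, and I will prove it by a local analysis: in any cube $C \subseteq B$ meeting both hyperplanes, the argument already run in the proof of Lemma \ref{lemma:projection} gives a factorization $C = C' \times [0,1] \times [0,1]$ in which the two $[0,1]$ factors realize the markings of $H_2$ and $H_1$. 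Because $H_2$'s marking factor is disjoint from the $B$-height factor, $H_2 \cap C$ spans the full $B$-height of $C$; gluing these local products across the cubes of $B$ yields the global product description of $H_2 \cap B$.

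Fix $x = (v,t) \in e$, and set $d := d_{B_0}(v, K)$; because $K$ is a closed convex subset of the complete CAT(0) space $B_0$, this infimum is realized by some $p_0 \in K$. For the upper bound, the point $(p_0, t)$ lies in $K \times [0,1] \subseteq H_2$, and the product metric on $B$ gives $d_X(x, (p_0, t)) = d_{B_0}(v, p_0) = d$, so $d(x, H_2) \leq d$.

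For the lower bound I push $x$ and an arbitrary $y \in H_2$ into $H_1$ via the $1$-Lipschitz convex projection $\pi := \pi_{(X, H_1)}$. Convexity of $B$ in $X$ combined with the product structure of $B$ yields $\pi(x) = (v, 1/2)$, while Lemma \ref{lemma:projection}, with the roles of $H_1$ and $H_2$ interchanged, places $\pi(y)$ in $K$. Hence
\[ d_X(x, y) \;\geq\; d_X(\pi(x), \pi(y)) \;=\; d_{B_0}(v, \pi(y)) \;\geq\; d, \]
and taking the infimum over $y \in H_2$ gives $d(x, H_2) \geq d$. Combining the two bounds yields $d(x, H_2) = d$, which is independent of $t$, as required. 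Once the product description of $H_2 \cap B$ is in hand, the remainder is a direct combination of Lemma \ref{lemma:projection} with the product structure of $B$.
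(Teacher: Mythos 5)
Your argument is correct, but it takes a genuinely different route from the paper's. The paper localizes at the midpoint $x$ of $e$: it projects $x$ to $H_2$, uses Lemma~\ref{lemma:projection} to place $\pi(x)$ in $H_1\cap H_2$, observes that the geodesic $[x,\pi(x)]$ lives in $B_0\times\{1/2\}$, and then \emph{translates} this geodesic to height $h(y)$ for any other $y\in e$, noting that its endpoint still lands in $H_2$ because $H_2\cap C$ spans the full $B$-height of the single marked cube $C$ containing $\pi(x)$. This gives $d_{H_2}(y)\le d_{H_2}(x)$, after which convexity of $d_{H_2}$ upgrades the inequality to equality. You instead push for the global product description $H_2\cap B = K\times[0,1]$ with $K = H_1\cap H_2$, and then compute $d_{H_2}$ on $e$ exactly by a two-sided estimate: the upper bound from the product structure and the lower bound from $1$-Lipschitzness of $\pi_{(X,H_1)}$ together with Lemma~\ref{lemma:projection} applied with the roles of $H_1$ and $H_2$ swapped. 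What your approach buys is a clean closed-form expression $d(x,H_2)=d_{B_0}(v,K)$, independent of convexity arguments, and the global product statement is itself a useful structural fact; what the paper's approach buys is economy, since it never needs more than the local picture in one cube, and convexity of the distance function is a black box already available in \cite{BH}. The one step you flag as the main obstacle -- gluing the local products $H_2\cap C = K_C\times[0,1]$ into a global product -- does go through, but your sketch glosses over the point that every $p\in H_2\cap B$ lies in a cube belonging to \emph{both} blocks; this needs a short argument (if $p$ lies in a cube $D$ of $H_2$'s block and a cube $C'$ of $H_1$'s block, then $D\cap C'$ has the $1/2$-coordinate of $p$ in the $H_2$-direction, forcing $D\cap C'$, and hence one of $D$, $C'$, to carry edges of both markings). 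With that supplied, the proof is complete.
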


\begin{proof}
Suppose that $e$ is perpendicular to $H_{1}$.  Let $B$ denote the block containing the hyperplane $H_{1}$.
Consider the midpoint of $e$; call it $x$.  We let $\pi$ denote the projection from $X$ onto $H_{2}$.
Let $C$ be a closed marked cube of $B$ which contains $\pi(x)$.  As in the proof of Lemma \ref{lemma:projection},
we write $C = C' \times [0,1] \times [0,1]$, where $H_{1} \cap C = C' \times [0,1] \times \{ 1/2 \}$ and
$H_{2} \cap C = C' \times \{ 1/2 \} \times [0,1]$.  

Since $\pi(x) \in H_{1} \cap H_{2}$ by Lemma \ref{lemma:projection}, one has that $[x, \pi(x)] \subseteq B_{1/2} = B_{0} \times \{ 1/2 \}$.
We can express $[x, \pi(x)]$ as $[ \pi_{0}(x), \pi_{0}(\pi(x))] \times \{ 1/2 \}$, where $\pi_{0}$ denotes the projection
from $B$ to $B_{0}$.  If $y$ is some other point on $e$, then $[\pi_{0}(x), \pi_{0}(\pi(x))] \times \{ h(y) \}$ is a geodesic connecting
$y$ to a point in $H_{2}$.  It follows that $d_{H_{2}}(y) \leq d_{H_{2}}(x)$, for all $y \in e$.  

One argues that equality always holds
by the convexity of the function $d_{H_{2}}$ (see Corollary 2.5 on page 178 of \cite{BH}).  Indeed, suppose that $y_{1}$, $y_{2} \in e$, where
$h(y_{1}) < h(x) < h(y_{2})$, and $d_{H_{2}} (y_{i}) < d_{H_{2}}(x)$ for at least one index $i \in \{ 1, 2 \}$.  The function $d_{H_{2}}$ is
concave up (i.e., convex) and non-constant on the geodesic $[y_{1}, y_{2}]$, and attains a maximum value of $d_{H_{2}}(x)$ at the interior point $x$.
This is a contradiction.      
\end{proof}

\begin{lemma} (\cite{F1}, Lemma 2.6(4)) \label{lemma:4pt}
If $H_{1}$ and $H_{2}$ are hyperplanes, $H_{1}^{+} \cap H_{2}^{+}$, $H_{1}^{-} \cap H_{2}^{+}$, $H_{1}^{+} \cap H_{2}^{-}$, and $H_{1}^{-} \cap H_{2}^{-}$ are
all non-empty, then $H_{1} \cap H_{2} \neq \emptyset$.
\end{lemma}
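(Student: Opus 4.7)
The plan is a short proof by contradiction. Suppose $H_1 \cap H_2 = \emptyset$. The three ingredients I will use are all available from the preceding material: by Lemma \ref{lemma:blocks}(3), $B_{1/2}$ is a (connected) CAT(0) cubical complex, so $H_2 = \pi_{\mathcal{B}}(B_{1/2})$ is connected; by Theorem \ref{thm:biggie}, each open halfspace $H_i^\pm$ is convex and equals a superlevel (respectively sublevel) set of $h \circ \pi_i$, where $\pi_i$ is the projection onto the closed convex block $\pi_{\mathcal{B}}(B_i)$ containing $H_i$ and $h$ is the height function of Definition \ref{def:height}; and $X$ is the disjoint union $H_i^+ \sqcup H_i \sqcup H_i^-$.

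Since $H_2$ is connected and, by assumption, disjoint from $H_1$, it lies entirely in one of the two open halfspaces determined by $H_1$. Without loss of generality, assume $H_2 \subseteq H_1^+$. By hypothesis we may choose $x \in H_1^- \cap H_2^+$ and $y \in H_1^- \cap H_2^-$, and let $[x,y]$ be the unique CAT(0) geodesic joining them. Convexity of $H_1^-$ yields $[x,y] \subseteq H_1^-$. On the other hand, $(h \circ \pi_2)(x) > 1/2 > (h \circ \pi_2)(y)$; since $h \circ \pi_2$ is continuous on $X$, the Intermediate Value Theorem produces some $z \in [x,y]$ with $(h \circ \pi_2)(z) = 1/2$, i.e.\ $z \in H_2$. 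Then $z \in H_2 \subseteq H_1^+$ while simultaneously $z \in [x,y] \subseteq H_1^-$, contradicting $H_1^+ \cap H_1^- = \emptyset$.

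I do not expect any substantive obstacle. The argument is essentially a repackaging of Theorem \ref{thm:biggie}: the convexity of halfspaces confines the geodesic between $x$ and $y$ to $H_1^-$, while the description of $H_2$ as a level set of $h \circ \pi_2$ forces the same geodesic to meet $H_2 \subseteq H_1^+$, which is a contradiction. The only conceptual point worth flagging is the need to know that $H_2$ itself is connected, so that disjointness from $H_1$ localizes it into a single halfspace of $H_1$; but this is immediate from Lemma \ref{lemma:blocks}(3) together with the fact that $\pi_{\mathcal{B}}$ restricted to a block is an isometric embedding.
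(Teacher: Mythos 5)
Your proof is correct, and it takes a genuinely different route from the paper's. The paper's argument is topological: assuming $H_1 \cap H_2 = \emptyset$, the four open halfspaces and their four pairwise intersections are all convex (hence contractible) open sets, and a van Kampen computation with the cover $\{H_1^+ \cup H_2^+,\ H_1^- \cup H_2^-\}$ shows $\pi_1(X) \cong \mathbb{Z}$, contradicting the contractibility of the CAT(0) space $X$. Your argument is instead an elementary connectedness argument: since $B_{1/2}$ is CAT(0) by Lemma \ref{lemma:blocks}(3) and $\pi_{\mathcal{B}}$ embeds the block isometrically, $H_2$ is path-connected; if $H_2$ is disjoint from $H_1$ it therefore lies in a single open halfspace of $H_1$, say $H_1^+$; then any geodesic joining a point of $H_1^- \cap H_2^+$ to a point of $H_1^- \cap H_2^-$ stays in the convex set $H_1^-$ by Theorem \ref{thm:biggie}, yet must meet $H_2$ by the intermediate value theorem applied to the continuous function $h \circ \pi_2$, which is absurd. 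Both proofs are sound. Yours avoids algebraic topology entirely and extracts more leverage from the structure already established in Section \ref{sec:main} (the sublevel/superlevel description of halfspaces from the proof of Theorem \ref{thm:biggie}, their convexity, and connectedness of hyperplanes); it is arguably the more direct argument. The paper's approach is somewhat softer, requiring only that the relevant pieces be contractible and that $X$ be simply connected, which makes it a bit more portable to abstract wall-like settings; but for the problem at hand the two arguments are of comparable length and the geometric one is cleaner.
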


\begin{proof}
Assume that the four intersections in the hypothesis are all non-empty
and $H_{1} \cap H_{2} = \emptyset$.  It follows that $\{ H_{1}^{+} \cup H_{2}^{+},
H_{1}^{-} \cup H_{2}^{-} \}$ is an open cover of $X$.  Each of the half-spaces
$H_{1}^{+}$, $H_{1}^{-}$, $H_{2}^{+}$, and $H_{2}^{-}$ is a convex subspace of
the CAT(0) space $X$, and therefore contractible.  Each of the four intersections
in the hypothesis is contractible for the same reason.

It follows that the sets $X^{+} = H_{1}^{+} \cup H_{2}^{+}$ and $X^{-} = H_{1}^{-} \cup H_{2}^{-}$
are simply connected, since each is the union of two open contractible sets which intersect in an open
contractible set.  The intersection $X^{+} \cap X^{-}$ is the disjoint union of two open contractible sets:
$H_{1}^{+} \cap H_{2}^{-}$ and $H_{2}^{+} \cap H_{1}^{-}$.  Let $c$ be an arc contained in $X^{+}$, connecting
$H_{1}^{+} \cap H_{2}^{-}$ to $H_{2}^{+} \cap H_{1}^{-}$, and meeting each in an open segment.

We apply van Kampen's theorem to the pieces $X^{-} \cup c$ and $X^{+}$.  The first piece $X^{-} \cup c$ satisfies
$\pi_{1}( X^{-} \cup c ) \cong \mathbb{Z}$, while the second is simply connected.  The intersection of these two pieces
is the simply connected set $(H_{1}^{+} \cap H_{2}^{-}) \cup (H_{2}^{+} \cap H_{1}^{-}) \cup c$.  It follows that
$\pi_{1} ( X^{-} \cup X^{+}) = \pi_{1}(X)$ is isomorphic to $\mathbb{Z}$.  Since $X$ is CAT(0), it must be contractible.
This is a contradiction.
\end{proof}

%%%%%%%%%%%%%%%%%%%%%%%%%%%%%%%%%%%%%%
\subsection{Sageev's Combinatorial Results} \label{subsec:sageev}
%%%%%%%%%%%%%%%%%%%%%%%%%%%%%%%%%%%%%%
We cover only some basic combinatorial results in this subsection.  A more
advanced treatment of the combinatorial properties of hyperplanes appears
in an appendix to \cite{HW}.

\begin{proposition} \label{prop:geodesic}
\cite{Sageev}  An edge-path $p$ in $X^{1}$ is geodesic if and only if $p$ crosses any given hyperplane $H$
at most once.
\end{proposition}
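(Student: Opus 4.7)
The plan is to establish the identity $d_{X^{1}}(u,v) = |S(u,v)|$, where $S(u,v)$ denotes the (necessarily finite) set of hyperplanes separating the endpoints $u$ and $v$ of $p$, and to deduce both directions of the proposition from this identity via a parity count. Recall that each edge $e$ of $X$ has a unique hyperplane $H_{e}$ through its midpoint, and by Theorem \ref{thm:biggie} its endpoints lie in opposite open half-spaces of $H_{e}$. For an edge path $p$ from $u$ to $v$ and a hyperplane $H$, let $N(p,H)$ be the number of edges of $p$ with midpoint on $H$; traversing $p$ and tracking which side of $H$ we occupy shows $N(p,H)$ is odd exactly when $H \in S(u,v)$. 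Summing over all hyperplanes, $|p| = \sum_{H} N(p,H) \geq |S(u,v)|$, with equality iff each hyperplane is crossed at most once.

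For the matching upper bound we induct on $|S(u,v)|$ to construct an edge path of length $|S(u,v)|$. The inductive step rests on the sub-lemma: if $u \neq v$, there exists an edge $e$ at $u$ with $H_{e} \in S(u,v)$. Granting this, let $u'$ be the other endpoint of $e$; since $u$ and $u'$ lie on the same side of every hyperplane other than $H_{e}$, we have $S(u',v) = S(u,v) \setminus \{H_{e}\}$, so prepending $e$ to an inductively supplied edge path from $u'$ to $v$ completes the construction.

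To prove the sub-lemma, let $\gamma$ be the CAT(0) geodesic from $u$ to $v$, and let $C$ be a cube whose interior $\gamma$ initially enters. Writing the initial direction of $\gamma$ in local coordinates as $w = \sum_{i} w_{i} e_{i}$ (with $e_{i}$ the edges of $C$ at $u$), choose $j$ with $w_{j}$ maximal, necessarily positive. A hitting-time computation shows the segment $\gamma(t) = tw$ meets the local slice of $H_{e_{j}}$ at time $1/(2w_{j})$, which is no larger than the exit time $1/w_{j}$ from $C$, so $\gamma$ crosses $H_{e_{j}}$ inside $C$. Now the function $t \mapsto d(\gamma(t), H_{e_{j}})$ is convex, vanishes at this crossing time, and cannot vanish again without being identically zero on an interval; thus $\gamma$ cannot return to $H_{e_{j}}^{-}$ after entering $H_{e_{j}}^{+}$, and consequently $v \in \overline{H_{e_{j}}^{+}}$. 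Since $v$ is a vertex and hyperplanes avoid vertices, $v \in H_{e_{j}}^{+}$, so $H_{e_{j}}$ separates $u$ from $v$.

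Once $d_{X^{1}}(u,v) = |S(u,v)|$ is established, both directions of the proposition follow. If $p$ is a geodesic then $|p| = |S(u,v)|$, so by the equality clause of the first paragraph every hyperplane is crossed at most once; conversely, if every hyperplane is crossed at most once then the parity count gives $|p| = |S(u,v)| = d_{X^{1}}(u,v)$, making $p$ a geodesic. The main obstacle is the sub-lemma, where the local CAT(0) cube geometry at $u$ must be leveraged to promote the infinitesimal direction of $\gamma$ to an actual separating hyperplane dual to an edge incident to $u$.
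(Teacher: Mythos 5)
The proposal is correct, but it follows a genuinely different route from the paper. The paper proves the forward direction by contradiction: take a shortest geodesic edge-path $p = (e_1,\ldots,e_n)$ crossing some hyperplane twice, so $H_1 = H_n$ with no other repetitions, then use Lemma~\ref{lemma:4pt} and Lemma~\ref{lemma:intersection} to show that the entire inner segment $(e_2,\ldots,e_{n-1})$ lies in the block of $H_1$ at constant height, and finally project to the other side of the block to exhibit a shorter edge-path. The reverse direction in the paper is the same parity count you use. You instead establish the explicit identity $d_{X^1}(u,v) = |S(u,v)|$ once and for all: the parity count gives $|p| \ge |S(u,v)|$ with equality iff no repeated crossing, and the inductive construction supplies an edge-path realizing $|S(u,v)|$. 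The new ingredient is your sub-lemma (an edge at $u$ dual to a hyperplane separating $u$ from $v$ always exists), proved by analyzing the CAT(0) geodesic locally at $u$ and invoking convexity of $d(\cdot,H)$. This bypasses the corner-move-style Lemmas~\ref{lemma:intersection} and~\ref{lemma:4pt} entirely, at the cost of a local hitting-time argument in the first cube; it also yields the useful distance formula $d_{X^1}(u,v) = |S(u,v)|$ as a byproduct, which the paper only gets implicitly. One spot worth tightening: in the convexity step you should say explicitly that for $t$ slightly larger than $t_0 = 1/(2w_j)$ but below the exit time, the $j$-th coordinate of $\gamma(t)$ exceeds $1/2$, so $\gamma(t) \in H_{e_j}^{+}$ and $d(\gamma(t), H_{e_j}) > 0$; this is what makes ``$d(\gamma(\cdot),H_{e_j})$ vanishes on an interval starting at $t_0$'' an actual contradiction rather than merely a curiosity, and it is what rules out the degenerate case $v \in H_{e_j}$ as well. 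Alternatively, you could appeal directly to claims (1) and (2) in the proof of Theorem~\ref{thm:biggie} --- that $h\circ\pi$ attains its extrema on a geodesic at its endpoints and that $(h\circ\pi)^{-1}(1/2)$ is exactly the hyperplane --- which gives $v\in H_{e_j}^+$ without touching the convexity of $d(\cdot,H_{e_j})$ at all.
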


\begin{proof}
We first prove the forward direction.
Suppose, on the contrary, that a certain geodesic edge-path crosses some hyperplane more than once.  We consider
a shortest geodesic edge-path $p$ which crosses some hyperplane multiple times.  We write $p = (e_{1}, \ldots, e_{n})$,
and let $H_{1}, \ldots, H_{n}$ denote the hyperplanes crossed by the edges $e_{1}, \ldots, e_{n}$ (respectively).  Since $p$
is the shortest edge-path with the given property, we must have $H_{1} = H_{n}$, but there are no other repetitions in the list
$H_{1}, \ldots, H_{n}$ (i.e., a total of $n-1$ distinct hyperplanes are crossed by $p$).  We let $H_{1}^{-}$ denote the component of $X-H_{1}$
containing $\iota(e_{1})$ and $\tau(e_{n})$.  Clearly the other component of $X-H_{1}$, denoted $H_{1}^{+}$, contains the edge-path
$(e_{2}, \ldots, e_{n-1})$.  We adopt the convention that $\iota(e_{j}) \in H_{j}^{-}$ and $\tau(e_{j}) \in H_{j}^{+}$, for
$j \in \{ 2, \ldots, n-1 \}$.  

Consider an edge $e_{j}$, $j \in \{ 2, \ldots, n-1 \}$.  Note that $\iota(e_{1}) \in H_{1}^{-} \cap H_{j}^{-}$,
$\iota(e_{j}) \in H_{1}^{+} \cap H_{j}^{-}$, $\tau(e_{j}) \in H_{1}^{+} \cap H_{j}^{+}$,
and $\tau(e_{n}) \in H_{1}^{-} \cap H_{j}^{+}$.  It follows that the hyperplanes $H_{1}$ and $H_{j}$ intersect, for $j \in \{ 2, \ldots, n-1 \}$,
by Lemma \ref{lemma:4pt}.

We now apply Lemma \ref{lemma:intersection}.  
Since $d( \iota(e_{2}), H_{1}) = 1/2$ and $d_{H_{1}}$ is constant on $e_{2}$, we must have $d_{H_{2}}(x) = 1/2$ for all
$x$ in $e_{2}$.  We can inductively conclude that $d_{H_{1}}(x) = 1/2$ for all $x$ in $(e_{2}, \ldots, e_{n-1})$.

It follows that the entire edge-path $p = (e_{1}, \ldots, e_{n})$ is contained in the block $B$ containing $H_{1}$.  The edges
$e_{2}, \ldots, e_{n-1}$ are all unmarked edges in the block $B = B_{0} \times [0,1]$.  We identify $\iota(e_{2})$ with a vertex
$(v',1) \in B$ and $\tau(e_{n-1})$ with a vertex $(v'',1) \in B$.  It follows that $\iota(e_{1}) = (v', 0)$ and $\tau(e_{n}) = (v'',0)$.
The edge-path $(e_{2}, \ldots, e_{n-1})$ connects $(v',1)$ to $(v'',1)$.  There is a corresponding edge-path $(e'_{2}, \ldots, e'_{n-1})$ 
connecting $(v',0)$ to $(v'',0)$.  This contradicts the fact that $p$ is geodesic.

Now suppose that $p$ crosses any given hyperplane $H$ at most once.  It follows that the endpoints $\iota(p), \tau(p)$ 
of $p$ are separated
by all of the hyperplanes crossed by $p$.  If we assume that there are $n$ such hyperplanes in all (and so $p$ has length $n$),
then any edge-path $q$ from $\iota(p)$ to $\tau(p)$ must cross all $n$ of these hyperplanes, so the length of $q$ is at least $n$.  
It follows that $p$ is geodesic.  
\end{proof}

\begin{definition} Suppose that $(e_{1},e_{2})$ is an edge-path in a CAT(0) cubical complex $X$ such that $e_{1}$ and $e_{2}$ are
perpendicular sides of a square $C$ in $X$.  We let $e'_{i}$ denote the side of $C$ opposite $e_{i}$, for $i = 1,2$.  The operation
of replacing $(e_{1},e_{2})$ by the edge-path $(e'_{2},e'_{1})$ is called a \emph{corner move}. Note that the edge-paths $(e_{1}, e_{2})$
and $(e'_{2}, e'_{1})$ have the same endpoints. 
\end{definition}

\begin{proposition} \label{prop:corner}
If $(e_{1},e_{2})$ is an edge-path in $X$, $e_{i}$ crosses the hyperplane $H_{i}$ ($i=1,2$), $H_{1} \neq H_{2}$, 
and $H_{1} \cap H_{2} \neq \emptyset$, then the edges $e_{1}$ and $e_{2}$ are perpendicular sides of a square $C$ in $X$.
\end{proposition}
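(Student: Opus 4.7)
The plan is to reduce, via Gromov's link condition, to an Alexandrov angle bound at the common vertex $v = \tau(e_1) = \iota(e_2)$. Let $\xi_1, \xi_2 \in Lk(v,X)$ denote the vertices corresponding to $e_1, e_2$; then $e_1, e_2$ are perpendicular sides of a square in $X$ if and only if $\xi_1, \xi_2$ span an edge in $Lk(v, X)$. Since $X$ is CAT(0), $Lk(v, X)$ is a flag all-right spherical complex and hence CAT(1). In any such link the closed $\pi/2$-ball about a vertex coincides with its closed simplicial star (a spherical geodesic of length at most $\pi/2$ issuing from a vertex stays inside its initial simplex), so any other vertex at distance at most $\pi/2$ from $\xi_1$ is automatically adjacent to $\xi_1$. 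Since $\angle^X_v(e_1, e_2) = d_{Lk(v,X)}(\xi_1, \xi_2)$, the proposition reduces to the bound $\angle^X_v(e_1, e_2) \leq \pi/2$.

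To produce this bound, I would exhibit a geodesic quadrilateral three of whose angles are at least $\pi/2$. Let $m_i := e_i \cap H_i$, the midpoint of $e_i$. The product decomposition of the block containing $H_i$ (Lemma \ref{lemma:blocks}(2)) shows that $m_i$ is the unique closest point of $H_i$ to $v$, so $m_i = \pi_{(X, H_i)}(v)$. Set $q := \pi_{(X, H_2)}(m_1)$; since $m_1 \in H_1$ and $H_1 \cap H_2 \neq \emptyset$, Lemma \ref{lemma:projection} gives $q = \pi_{(X, H_1 \cap H_2)}(m_1) \in H_1 \cap H_2$. One then checks that $v, m_1, q, m_2$ are pairwise distinct; the crucial case is $q \neq m_2$, because equality would put $m_2$ in $H_1$, forcing the edge $e_2$ to cross both $H_1$ and $H_2$, which contradicts $H_1 \neq H_2$ together with the uniqueness of the hyperplane crossed by an edge. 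The other inequalities are settled by similar arguments.

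Now form the geodesic quadrilateral with consecutive vertices $v, m_1, q, m_2$. The projection-angle characterization (for closed convex $C \subseteq X$, $x \notin C$, and $z \in C \setminus \{\pi_{(X,C)}(x)\}$, one has $\angle^X_{\pi_{(X,C)}(x)}(x, z) \geq \pi/2$) supplies $\angle^X_{m_1}(v, q) \geq \pi/2$, $\angle^X_q(m_1, m_2) \geq \pi/2$, and $\angle^X_{m_2}(v, q) \geq \pi/2$. Since the sum of Alexandrov angles in a geodesic quadrilateral in a CAT(0) space is at most $2\pi$ (triangulate by a diagonal and apply the CAT(0) triangle angle sum twice), we deduce $\angle^X_v(m_1, m_2) \leq \pi/2$. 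The segments $[v, m_1] \subseteq e_1$ and $[v, m_2] \subseteq e_2$ determine the same directions at $v$ as the full edges, so $\angle^X_v(e_1, e_2) = \angle^X_v(m_1, m_2) \leq \pi/2$, as required.

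The main obstacle I foresee is bookkeeping rather than conceptual: identifying each $m_i$ as the projection $\pi_{(X, H_i)}(v)$ via the block product structure, verifying pairwise distinctness of the four quadrilateral vertices, and checking that the three projection-angle inequalities really apply with the correct hypotheses at the correct vertices. All of the underlying CAT(0) tools (projection angle inequality, geodesic quadrilateral angle sum, identification of Alexandrov angles with distances in the link) are standard and available from \cite{BH}; the hyperplane combinatorics needed are exactly what is encoded in Lemmas \ref{lemma:blocks} and \ref{lemma:projection}.
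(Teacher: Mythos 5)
Your argument is correct, but it takes a genuinely different route from the paper's.  The paper's proof is essentially combinatorial: it places the whole edge-path inside the block $B$ of $H_{1}$, applies Lemma~\ref{lemma:intersection} to show $d_{H_{1}}$ is constant ($\equiv 1/2$) on $e_{2}$, and reads off from the product structure $B = B_{0} \times [0,1]$ that $e_{2}$ lies in the top slice $B_{0}\times\{1\}$, so that $(e_{1},e_{2})$ bounds half of an explicit square $[v,v']\times[0,1]$.  You instead reduce the problem to the Alexandrov angle bound $\angle_{v}^{X}(e_{1},e_{2}) \le \pi/2$ and prove it by building a geodesic quadrilateral $v, m_{1}, q, m_{2}$ from the projection maps and invoking the CAT(0) quadrilateral angle-sum (i.e.\ the content of the Flat Quadrilateral Theorem, II.2.11 in~\cite{BH}), then pass through the flag condition on $Lk(v,X)$ to recover the square.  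Both approaches ultimately rest on Lemma~\ref{lemma:projection} --- yours directly, the paper's indirectly via Lemma~\ref{lemma:intersection} --- but yours uses only the ``CAT(0) projection'' toolkit while the paper's exploits the block decomposition more concretely and is correspondingly shorter.  One small point you should make explicit: you invoke ``uniqueness of the hyperplane crossed by an edge'' in the distinctness check (e.g.\ $q\ne m_{2}$, and likewise $q \ne m_{1}$), and this is not stated in the paper before this point.  It does, however, follow immediately from Lemma~\ref{lemma:intersection}: if an edge $e$ crossed two distinct intersecting hyperplanes $H\ne H'$, then $d_{H'}$ would be constant on $e$ yet equal to $0$ at the midpoint and $1/2$ at the endpoints, a contradiction.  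With that lemma noted, your proof is complete and a nice alternative to the one in the text.
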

   
\begin{proof}
Let $B$ denote the block containing the hyperplane $H_{1}$.  We write $B = B_{0} \times [0,1]$, and assume that $\iota(e_{1}) = (v, 0)$,
for some vertex $v \in B_{0}$.  It follows that $\tau(e_{1}) = (v,1)$.  Since $H_{2} \cap H_{1} \neq \emptyset$ and $H_{1} \neq H_{2}$, 
we have that $d_{H_{1}}$ is constant on $e_{2}$, by Lemma \ref{lemma:intersection}.  In particular, $d_{H_{1}} (x) = 1/2$, for any $x$ on 
the edge $e_{2}$, since $d(\iota(e_{2}), H_{1}) = 1/2$.  It follows that $e_{2}$ has the form $[ (v,1), (v',1) ]$, where $[v,v']$ is an
edge in $B_{0}$.  Therefore the edge-path $(e_{1}, e_{2})$ forms one half of the boundary of the square $(v,v') \times [0,1] \subseteq B$,
as desired.
\end{proof}  

\begin{proposition} \label{prop:helly}
\cite{Sageev}  If $H_{1}, \ldots, H_{n}$ satisfy $H_{i} \cap H_{j} \neq \emptyset$ for any $i,j \in \{ 1, \ldots, n \}$, then
$H_{1} \cap \ldots \cap H_{n} \neq \emptyset$.
\end{proposition}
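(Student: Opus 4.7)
The plan is to induct on $n$, with $n=2$ being exactly the hypothesis. Assume inductively that whenever we have $k < n$ pairwise-intersecting hyperplanes, they share a common point. The task is to promote this from $n-1$ to $n$.

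Fix hyperplanes $H_1, \ldots, H_n$ pairwise intersecting. Since any subfamily of size $n-1$ is also pairwise intersecting, the inductive hypothesis yields a point $p \in H_1 \cap \cdots \cap H_{n-1}$. I would then project $p$ onto the closed convex set $H_n$: let $q = \pi_{(X,H_n)}(p)$. The goal is to show that this single projection $q$ already lies in every $H_i$ (for $i < n$), giving $q \in H_1 \cap \cdots \cap H_n$.

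The key step is a direct application of Lemma \ref{lemma:projection}. For each $i \in \{1, \ldots, n-1\}$, we have $p \in H_i$ and $H_i \cap H_n \neq \emptyset$ by the pairwise hypothesis, so the lemma (with the roles $H_j = H_i$, $H_i = H_n$) tells us
\[
\pi_{(X,H_n)}(p) \;=\; \pi_{(X,H_i \cap H_n)}(p) \;\in\; H_i \cap H_n.
\]
In particular $q \in H_i$ for every $i < n$, while also $q \in H_n$ by construction, so $q$ is the required common point. (If $p$ already lies in $H_n$, then $q = p$ and the conclusion is immediate.)

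I do not anticipate a genuine obstacle here: once Lemma \ref{lemma:projection} is available, the argument reduces to recognizing that the projection onto $H_n$ of a point lying in another hyperplane $H_i$ automatically lands in $H_i \cap H_n$, so projecting a common point of $H_1, \ldots, H_{n-1}$ onto $H_n$ keeps us inside each of those hyperplanes simultaneously. The only thing one needs to check carefully is that Lemma \ref{lemma:projection} applies pairwise to $(H_i, H_n)$, which is guaranteed by the pairwise-intersection hypothesis.
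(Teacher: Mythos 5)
Your proof is correct and is essentially the same as the paper's: induct on $n$, take a point in the intersection of the first $n-1$ hyperplanes, project onto $H_n$, and apply Lemma \ref{lemma:projection} pairwise to conclude the projected point lies in every $H_i$.
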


\begin{proof}
The proof is by induction on $n$.  The conclusion is obvious if $n=2$.  We suppose that $n>2$.  By induction, $H_{1} \cap \ldots \cap H_{n-1} \neq \emptyset$,
so we take $x \in H_{1} \cap \ldots \cap H_{n-1}$.  By Lemma \ref{lemma:projection}, 
$\pi_{(X,H_{n})}(x) = \pi_{(X,H_{j} \cap H_{n})}(x)$ for $j \in \{ 1, \ldots, n-1 \}$.  It follows
that $\pi_{(X,H_{n})}(x) \in H_{1} \cap \ldots \cap H_{n}$.
\end{proof}

%%%%%%%%%%%%%%%%%%%%%%%%%%%%%%%%%%%%%%%%%%%%%%%%%%%%%%%%%%%%%%
\section{Applications} \label{sec:applications}
%%%%%%%%%%%%%%%%%%%%%%%%%%%%%%%%%%%%%%%%%%%%%%%

%%%%%%%%%%%%%%%%%%%%%%%%%%%%%%%%%%%%%5
\subsection{The set-with-walls property}
%%%%%%%%%%%%%%%%%%%%%%%%%%%%%%%%%%%%%%%%5

\begin{definition} (first defined in \cite{HP})
Let $S$ be a set.  A \emph{wall} $W$ in $S$ is a partition $\{ W^{-}, W^{+} \}$ of $S$.
Two points $x,y \in S$ are \emph{separated} by the wall $W$ if $x \in W^{-}$ and $y \in W^{+}$
(or vice versa).  We say that $(S, \mathcal{W})$ is a \emph{set with walls} if $\mathcal{W}$ is a collection
of walls in $S$ such that, for any $x,y \in S$, at most finitely many walls $W \in \mathcal{W}$ separate
$x$ from $y$.

If $G$ is a group and $S$ is a $G$-set, then $(S, \mathcal{W})$ is a \emph{$G$-set with walls} if the natural
action of $G$ permutes the set $\mathcal{W}$.
\end{definition}

\begin{definition}
If $(S, \mathcal{W})$ is a set with walls, then the \emph{wall pseudometric} $d_{(S,\mathcal{W})}: S \times S \rightarrow \mathbb{R}^{+}$
is defined by 
$$ d_{(S,\mathcal{W})}(x,y) = | \{ W \in \mathcal{W} \mid W ~separates~ x ~from~ y \}|.$$
If $(S, \mathcal{W})$ is a $G$-set with walls, then we say that $G$ acts \emph{properly} on $(S, \mathcal{W})$ if, for any $r > 0$ and $x \in S$,
the set
$$ \{ g \in G \mid d_{(S,\mathcal{W})}(x,gx) < r \}$$
is finite.
\end{definition}

\begin{remark}
It is straightforward to check that $d_{(S,\mathcal{W})}$ is symmetric and satisfies the triangle inequality, and that $G$ acts isometrically
on  $(S, \mathcal{W})$ if the latter is a $G$-set with walls.
\end{remark}

\begin{theorem}
If $X$ is a CAT(0) cubical complex, then $(X^{0}, \mathcal{W})$ is a set with walls, where $\mathcal{W} = \{ \{ H^{+} \cap X^{0}, H^{-} \cap X^{0} \} \mid
$H$ ~is~a~hyperplane~in~ X \}$.  If $G$ acts cellularly and by isometries on $X$, then $(X^{0}, \mathcal{W})$ is a $G$-set with walls.  If $G$ acts properly
on $X$, then $G$ acts properly on $(X^{0}, \mathcal{W})$.
\end{theorem}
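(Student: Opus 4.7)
The plan is to verify, for each hyperplane $H$ of $X$, that $\{H^{+}\cap X^{0},H^{-}\cap X^{0}\}$ is a genuine two-part partition of $X^{0}$, and then to establish the three assertions in order. By Theorem \ref{thm:biggie} the half-spaces $H^{+}$ and $H^{-}$ are disjoint open sets whose union is $X\setminus H$, so it suffices to check that $H$ itself contains no vertex. Writing $H=\pi_{\mathcal{B}}(B_{1/2})$ for a block $B$, every marked cube $C$ of $B$ factors as $C'\times[0,1]$ with the $[0,1]$-factor marked, so $B_{1/2}\cap C=C'\times\{1/2\}$ meets no vertex of $C$. Both pieces of the partition are non-empty because $B$ contains at least one marked cube of positive dimension and its two endpoints along the marked direction lie on opposite sides of $H$.

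Next I would verify the finiteness axiom. For $x,y\in X^{0}$ choose any edge-path $p$ in $X^{1}$ from $x$ to $y$ of length $n$. A wall $\{H^{+}\cap X^{0},H^{-}\cap X^{0}\}$ separates $x$ from $y$ precisely when $H$ itself separates them, and that forces $p$ to cross $H$ at least once; since $p$ has only $n$ edges, at most $n$ distinct hyperplanes are crossed by $p$, hence at most $n$ walls separate $x$ from $y$. More sharply, Proposition \ref{prop:geodesic} gives the identification $d_{(X^{0},\mathcal{W})}(x,y)=d_{X^{1}}(x,y)$, which I will invoke again below.

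For the $G$-set assertion, if $g\in G$ acts cellularly and by isometries then $g$ sends each marked cube to a marked cube (a cellular isometry carries directed edges to directed edges and preserves the relation of being opposite sides of a $2$-cell, hence preserves markings), respects the equivalence relation on $\mathcal{M}(X)$, and so induces a cellular isometry of $\mathcal{B}(X)$ commuting with $\pi_{\mathcal{B}}$. In particular $g$ permutes the connected components of $\mathcal{B}(X)$, and $g\cdot H^{\pm}$ are the two half-spaces of the hyperplane $g\cdot H$. Therefore $G$ permutes $\mathcal{W}$.

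Finally, for properness, I would interpret ``$G$ acts properly on $X$'' in the standard metric sense that $\{g\in G\mid d_{X}(x,gx)<R\}$ is finite for every $x\in X$ and $R>0$. Since an edge-path of combinatorial length $n$ has CAT(0) length exactly $n$, one has $d_{X}(x,gx)\le d_{X^{1}}(x,gx)$, and combining this with $d_{(X^{0},\mathcal{W})}(x,gx)=d_{X^{1}}(x,gx)$ from Proposition \ref{prop:geodesic} yields
\[ \{g\in G\mid d_{(X^{0},\mathcal{W})}(x,gx)<r\}\subseteq\{g\in G\mid d_{X}(x,gx)<r\}, \]
which is finite by hypothesis. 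The main obstacle, such as it is, lies in step three: one must confirm that the $G$-action on $X$ does lift to $\mathcal{B}(X)$ in a way that carries half-spaces to half-spaces, which comes down to the observation about preservation of markings noted above.
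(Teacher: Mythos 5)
Your proof is correct and follows essentially the same route as the paper, which gives only a brief sketch: Theorem \ref{thm:biggie} for the partition, crossing hyperplanes along edge-paths for finiteness, and preservation of markings by cellular isometries for the $G$-set and properness claims. You have simply filled in the details that the paper declares ``straightforward to check,'' including the useful observation that properness follows from the chain $d_{X}(x,gx)\le d_{X^{1}}(x,gx)=d_{(X^{0},\mathcal{W})}(x,gx)$.
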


\begin{proof} (Sketch)
The fact that $\{ H^{+} \cap X^{0}, H^{-} \cap X^{0} \}$ is a wall follows from Theorem \ref{thm:biggie}; 
the fact that two vertices $x$, $y$ are separated by at most finitely many
walls $W_{H} = \{ H^{+} \cap X^{0}, H^{-} \cap X^{0} \} \in \mathcal{W}$ follows from the fact that a wall $W_{H}$ separates $x$ from $y$ if and only if
a geodesic edge-path from $x$ to $y$ crosses $H$.  The remaining statements are similarly straightforward to check.
\end{proof}

We note that \cite{CN} contains a proof of the converse:  there is a construction of a CAT(0) cubical complex associated to any space with walls.

\begin{definition}
A discrete group $G$ has the \emph{Haagerup property} if there is a proper affine isometric action of $G$ on a Hilbert space
$\mathcal{H}$.  Here ``proper'' means
metrically proper:  if $v \in \mathcal{H}$ and $r > 0$ are given, then $|\{ g \in G \mid d(v, g \cdot v) < r \}| < \infty$.
\end{definition}

\begin{theorem} \cite{NR1}
If $G$ acts properly, cellularly, and by isometries on a CAT(0) cubical complex $X$, then $G$ has the Haagerup property.
\end{theorem}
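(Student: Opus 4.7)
The plan is to build a proper affine isometric action of $G$ on a Hilbert space directly from the set-with-walls structure $(X^0, \mathcal{W})$ established in the previous theorem. Let $\mathcal{H}$ denote the set of all half-spaces of $X$ (so each wall contributes the pair $\{H^+ \cap X^0, H^- \cap X^0\}$). Since $G$ acts on $X$ cellularly and by isometries, it permutes the hyperplanes, and hence permutes $\mathcal{H}$. This yields a unitary representation $\pi$ of $G$ on the Hilbert space $\mathcal{K} = \ell^2(\mathcal{H})$ by $(\pi(g)f)(A) = f(g^{-1} A)$.

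Next, fix a basepoint $x_0 \in X^0$ and, for each $x \in X^0$, let $\sigma(x) = \{A \in \mathcal{H} \mid x \in A\}$. The first step is to observe that for any $x, y \in X^0$, the symmetric difference $\sigma(x) \bigtriangleup \sigma(y)$ is precisely the set of half-spaces that contain exactly one of $x, y$; these come in pairs (both half-spaces of a wall that separates $x$ from $y$), so $|\sigma(x) \bigtriangleup \sigma(y)| = 2\, d_{(X^0,\mathcal{W})}(x,y)$. By the set-with-walls property, this quantity is finite. Define the cochain $b: G \to \mathcal{K}$ by
\[
b(g) = \chi_{\sigma(g x_0) \bigtriangleup \sigma(x_0)},
\]
which lies in $\mathcal{K}$ by finiteness. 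A routine set-theoretic calculation, using $g \cdot \sigma(x_0) = \sigma(g x_0)$, verifies the cocycle identity $b(gh) = b(g) + \pi(g) b(h)$, so $g \cdot v := \pi(g) v + b(g)$ defines an affine isometric action of $G$ on $\mathcal{K}$.

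Finally, properness follows from the norm computation $\|b(g)\|^2 = |\sigma(gx_0) \bigtriangleup \sigma(x_0)| = 2\, d_{(X^0,\mathcal{W})}(x_0, g x_0)$, combined with properness of the $G$-action on $(X^0, \mathcal{W})$. Given $v \in \mathcal{K}$ and $r > 0$, the set $\{g \in G \mid \|v - g \cdot v\| < r\}$ is controlled, after the triangle inequality and bounding $\|b(g)\|$ in terms of $\|v - g \cdot v\|$ and $\|v\|$, by a set of the form $\{g \mid d_{(X^0,\mathcal{W})}(x_0, g x_0) < R\}$, which is finite by the previous theorem.

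The only genuine obstacle is the verification that $\sigma(x) \bigtriangleup \sigma(y)$ is finite and has cardinality $2\, d_{(X^0,\mathcal{W})}(x,y)$; this is immediate from Theorem \ref{thm:biggie} once one notes that a half-space $A$ lies in $\sigma(x) \bigtriangleup \sigma(y)$ iff its associated wall separates $x$ from $y$. The rest is a standard cocycle-on-a-wall-space argument and needs no additional machinery beyond what the paper has already developed.
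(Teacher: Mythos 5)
Your overall plan is the standard ``cocycle on a space with walls'' argument, which is the same route the paper's sketch takes (the paper works with $\ell^2$ of \emph{oriented} hyperplanes, a signed permutation representation, and the signed cochain $\delta(g)=\sum\pm H$; you work with $\ell^2$ of half-spaces, which is essentially the same Hilbert space up to a factor and a change of coordinates). However, there is a genuine error in the specific formula you wrote down: the unsigned characteristic function
\[
b(g)=\chi_{\sigma(gx_0)\,\bigtriangleup\,\sigma(x_0)}
\]
does \emph{not} satisfy the cocycle identity $b(gh)=b(g)+\pi(g)b(h)$. The issue is that symmetric differences compose as $(A\bigtriangleup B)\bigtriangleup(B\bigtriangleup C)=A\bigtriangleup C$, which is a relation mod $2$, not a relation in $\ell^2$; the sum $\chi_{A\bigtriangleup B}+\chi_{B\bigtriangleup C}$ has value $2$ on $(A\bigtriangleup B)\cap(B\bigtriangleup C)$, whereas $\chi_{A\bigtriangleup C}$ vanishes there. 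A concrete counterexample: let there be a single wall $\{A,A^c\}$ with $x_0\in A$, and let $g$ be an involution interchanging $A$ and $A^c$ with $g^2x_0=x_0$. Then $b(g)=\chi_{\{A,A^c\}}$, and $\pi(g)b(g)+b(g)=2\chi_{\{A,A^c\}}\neq 0=b(g^2)$.

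The fix is exactly the signing that the paper builds in. Replace your cochain by
\[
b(g)=\chi_{\sigma(gx_0)}-\chi_{\sigma(x_0)},
\]
which is supported on $\sigma(gx_0)\bigtriangleup\sigma(x_0)$ but takes values $\pm 1$ rather than $1$. Then $b(gh)=\chi_{\sigma(ghx_0)}-\chi_{\sigma(x_0)}$ telescopes as $(\chi_{\sigma(ghx_0)}-\chi_{\sigma(gx_0)})+(\chi_{\sigma(gx_0)}-\chi_{\sigma(x_0)})=\pi(g)b(h)+b(g)$, so the cocycle identity holds, and the norm computation $\|b(g)\|^2=|\sigma(gx_0)\bigtriangleup\sigma(x_0)|=2\,d_{(X^0,\mathcal{W})}(x_0,gx_0)$ is unchanged, so your properness argument goes through verbatim. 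This signed cochain is, after the obvious identification of the antidiagonal of $\ell^2(\mathcal{H})$ with $\ell^2(\mathcal{W}^{or})$, precisely the $\delta(g)=\sum\pm H$ of the paper, so once this is repaired your proof coincides with the paper's.
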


\begin{proof}
(Sketch)  One chooses a basepoint $v \in X^{0}$ and orientations for all hyperplanes $H \subseteq X$.  Let $\mathcal{W}^{or}$
denote the set of oriented hyperplanes.  The group $G$ acts as (infinite) signed permutation matrices on the Hilbert space
$\ell^{2}(\mathcal{W}^{or})$.  For $g \in G$, we let 
$$\delta(g) = \sum \pm H,$$
where the sum is over all hyperplanes separating $v$ from $gv$.  Here $H$ is counted with the plus sign if one crosses $H$ in the direction
of its given orientation when moving from $v$ to $gv$, and it is counted with the minus sign otherwise.

The action $\alpha: G \times \ell^{2}(\mathcal{W}^{or}) \rightarrow \ell^{2}(\mathcal{W}^{or})$ given by $\alpha(g,v) = g \cdot v + \delta(g)$
has the desired properties.
\end{proof}

%%%%%%%%%%%%%%%%%%%%%%%%%%%%%%%%%%
\subsection{The median algebra property}
%%%%%%%%%%%%%%%%%%%%%%%%%%%%%%%%%%

Let $\mathcal{P}(S)$ denote the power set of $S$.

\begin{definition}
A \emph{median algebra} is a set $S$ together with an interval operation
$[,]: S \times S \rightarrow \mathcal{P}(S)$ such that
\begin{enumerate}
\item $[x,x] = \{ x \}$ for $x \in S$;
\item $[x,y] = [y,x]$ for $x,y \in S$;
\item If $z \in [x,y]$, then $[x,z] \subseteq [x,y]$;
\item For any $x,y,z \in S$,
$[x,y] \cap [y,z] \cap [x,z]$ is a singleton set.  The unique element
of this singleton set, denoted $m(x,y,z)$, is called the \emph{median} of
$x$, $y$, $z$.
\end{enumerate}
A median algebra is \emph{discrete} if each set $[x,y]$ is finite.
\end{definition}

\begin{definition}
Assume that $X$ is a CAT(0) cubical complex.  If $x, y \in X^{0}$, then 
the \emph{geodesic interval} $[x,y]$ is the set of all vertices $z \in X^{0}$
that lie on some geodesic edge-path connecting $x$ to $y$.
\end{definition}

\begin{remark}
Note, for instance, that the geodesic interval between two integral points $(a,b)$ and $(c,d)$  ($a \leq c$ and $b \leq d$) 
in $\mathbb{R}^{2}$ is $\{ (x,y) \mid a \leq x \leq c; \, \, b \leq y \leq d; \, \, x,y \in \mathbb{Z} \}$.
\end{remark}

\begin{theorem}
Let $X$ be a CAT(0) cubical complex.  The set of vertices $X^{0}$ is a discrete
median algebra, where the interval operation $[,]: X^{0} \times X^{0} \rightarrow \mathcal{P}(X^{0})$
is the geodesic interval.
\end{theorem}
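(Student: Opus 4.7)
The plan is to characterize the geodesic interval combinatorially via halfspaces, and then verify the median-algebra axioms using this characterization. I claim that $z \in [x,y]$ if and only if every halfspace $H^\epsilon$ containing both $x$ and $y$ also contains $z$. In one direction, if $z$ lies on a geodesic edge-path from $x$ to $y$, Proposition \ref{prop:geodesic} implies that path crosses each hyperplane at most once, so any hyperplane not separating $x$ from $y$ leaves $z$ on the common side. Conversely, assuming the halfspace condition on $z$, I concatenate a geodesic edge-path from $x$ to $z$ with one from $z$ to $y$; any hyperplane crossed twice by the concatenation would have to separate $z$ from both $x$ and $y$, contradicting the halfspace condition, so by Proposition \ref{prop:geodesic} the concatenation is geodesic and $z \in [x,y]$.

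Axioms (1), (2), (3) are immediate from this halfspace description: for example, if $w \in [x,z]$ and $z \in [x,y]$, any halfspace containing $\{x, y\}$ contains $z$ by the first assumption and hence contains $w$ by the second, giving $w \in [x,y]$. For axiom (4), uniqueness is easy: if $m, m'$ both lie in $[x,y] \cap [y,z] \cap [x,z]$, then for each hyperplane $H$, pigeonhole places two of $x, y, z$ on a common side, and both $m$ and $m'$ share that side by the halfspace characterization. Hence $m$ and $m'$ agree on every hyperplane, forcing $m = m'$.

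Existence of the median is where the real work lies. I will induct on $N(x,y,z) = d(x,y) + d(y,z) + d(z,x)$, with $d$ denoting edge-path distance in $X^1$. When two of the vertices coincide, the median is the common vertex. Otherwise I split into two cases. If no hyperplane separates $x$ from both $y$ and $z$, then $x$ itself is the median: the halfspace characterization forces $x \in [y,z]$, and trivially $x \in [x,y] \cap [x,z]$. If some hyperplane $H_0$ separates $x$ from both $y$ and $z$, I will produce (via the sub-lemma below) an edge $e$ at $x$ crossing such an $H_0$; letting $x'$ denote the other endpoint decreases both $d(x,y)$ and $d(x,z)$ by one while fixing $d(y,z)$, reducing $N$. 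By induction there is a median $m' = m(x',y,z)$, and I verify $m' = m(x,y,z)$ from the halfspace characterization: any halfspace $H^\epsilon$ containing $\{x, y\}$ is not a halfspace of $H_0$ (which separates $x$ from $y$), so $x$ and $x'$ agree across $H^\epsilon$, hence $\{x', y\} \subseteq H^\epsilon$, and thus $m' \in H^\epsilon$ because $m' \in [x', y]$.

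The main obstacle is the sub-lemma: given a hyperplane separating $x$ from both $y$ and $z$, there is an edge at $x$ crossing some such hyperplane. My approach is to pick a hyperplane $H$ separating $x$ from both $y$ and $z$ and a geodesic edge-path $p = (e_1, \ldots, e_n)$ from $x$ to $y$, choosing the pair $(H, p)$ to minimize the step $k$ at which $p$ crosses $H$; I then aim to show $k = 1$. Assume $k \geq 2$ for contradiction, and write the successive vertices as $v_0 = x, v_1, \ldots, v_n = y$. Proposition \ref{prop:geodesic} places $v_0, \ldots, v_{k-1}$ in the halfspace $H^-$ of $H$ containing $x$ and $v_k, \ldots, v_n$ in $H^+$, and analogously $v_0, \ldots, v_{k-2}$ in $H_{k-1}^-$ and $v_{k-1}, \ldots, v_n$ in $H_{k-1}^+$, where $H_{k-1}$ is the hyperplane crossed by $e_{k-1}$. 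Three of the four halfspace intersections already contain path vertices and are nonempty; the decisive question is whether $H_{k-1}^- \cap H^+$ is nonempty. If yes, Lemma \ref{lemma:4pt} gives $H_{k-1} \cap H \neq \emptyset$, Proposition \ref{prop:corner} makes $e_{k-1}$ and $e_k$ adjacent sides of a common square, and a corner move produces a new geodesic crossing $H$ at step $k-1$, contradicting minimality. If no, then $H^+ \subseteq H_{k-1}^+$, whence $z \in H^+$ forces $z \in H_{k-1}^+$, and $H_{k-1}$ separates $x$ from both $y$ and $z$ while being crossed by $p$ at step $k-1$, again contradicting minimality. Either way $k = 1$, and $e_1$ is the desired edge.
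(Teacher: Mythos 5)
Your proof is correct and reaches the same conclusion by an approach that is structurally similar to the paper's (induction on $d(x,y)+d(y,z)+d(x,z)$, driven by corner moves justified via Lemma \ref{lemma:4pt} and Proposition \ref{prop:corner}), but the implementation differs in two useful ways. First, you lead with the halfspace characterization of intervals ($z\in[x,y]$ iff every halfspace containing $x$ and $y$ contains $z$), which the paper never makes explicit. This single lemma disposes of axioms (1)--(3) and the uniqueness half of (4) in one stroke, and it is also what lets you verify cleanly that the inductively produced median $m(x',y,z)$ is the median of the original triple. The paper instead handles (3) and uniqueness by direct geodesic-splicing and hyperplane-separation arguments, and in the inductive step must invoke (3) to pass from $[\tau(\hat e),y]\cap[x,\tau(\hat e)]\cap[x,y]$ down to $[z,y]\cap[x,z]\cap[x,y]$. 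Second, for the reduction itself, the paper works with two geodesics $p_x,p_y$ from $z$ and massages them (in two separate corner-move passes) until they share an initial edge, then moves $z$ inward; you instead fix a single geodesic from $x$ to $y$ together with a hyperplane separating $x$ from $\{y,z\}$, minimize the crossing step over all such pairs, and use one corner-move/case-split argument to extract an edge at $x$. Your version is somewhat more economical since only one path needs to be massaged, and the case $H^+\cap H_{k-1}^-=\varnothing$ (where you conclude $H_{k-1}$ already separates $x$ from $\{y,z\}$ and is crossed earlier) is handled gracefully by the minimality over pairs $(H,p)$ rather than over paths alone. Both proofs ultimately bottom out in the same combinatorial lemmas of Subsection \ref{subsec:lemmas}, which is precisely the point the paper is making.
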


\begin{proof}
Properties (1) and (2) are clear.

We now prove (3).  Let $z \in [x,y]$.  This means that there is a geodesic edge-path $p$ connecting $x$ to $y$ and
passing through $z$.  We can express $p$ as $p_{1} \cup p_{2}$, where $p_{1}$ is a geodesic edge-path connecting $x$ to $z$
and $p_{2}$ is a geodesic edge-path connecting $z$ to $y$.  If $w \in [x,z]$, then there is a geodesic edge-path $p'_{1}$
connecting $x$ to $z$ and passing through $w$.  Since $p'_{1}$ and $p_{1}$ have the same length, $p'_{1} \cup p_{2}$ is
also a geodesic edge-path connecting $x$ to $z$, and it passes through $w$.  Therefore
$w \in [x,y]$.  It follows that $[x,z] \subseteq [x,y]$, proving (3).

We now prove that $[x,y]$ is always finite.  If $H_{1}, \ldots, H_{n}$ are the hyperplanes separating $x$ from $y$, then,
by Proposition \ref{prop:geodesic}, an edge-path
$p$ is a geodesic edge-path connecting $x$ to $y$ if and only if $p$ begins at $x$ and crosses exactly the hyperplanes $H_{1}, \ldots, H_{n}$.
However, such an edge-path is uniquely determined by the order in which the hyperplanes $H_{1}, \ldots, H_{n}$ are crossed.  It
follows that there are at most $n!$ geodesic edge-paths, each of which passes through only finitely many points, so $|[x,y]| < \infty$.

We now prove (4).  Fix $x,y,z \in X^{0}$.  We first show that $[x,y] \cap [y,z] \cap [x,z]$ contains at most one element.  Suppose 
$v, w \in [x,y] \cap [y,z] \cap [x,z]$ and $v \neq w$.  There is a hyperplane $H$ separating $v$ from $w$.  It must be that two (or more)
elements of $\{ x, y , z \}$ lie in one of the complementary components of $X-H$.  It follows without loss of generality (i.e., up to relabelling)
that $v$ is separated from both $x$ and $y$ by $H$.  Since $v \in [x,y]$ by our assumption, there is a geodesic edge-path $p$ from $x$ to $y$
passing through $v$.  The geodesic edge-path $p$ would necessarily cross $H$ twice, however.  This is a contradiction.

We now need to show that $[x,y] \cap [y,z] \cap [x,z]$ is non-empty.  We do this by induction on $d(x,y) + d(y,z) + d(x,z)$, where $d$
denotes the edge-path (or combinatorial) distance.  The base case is trivial.  For
the inductive step, we need a definition.  If a hyperplane $H$ separates both $x$ and $y$ from $z$, then we say that $H$ is an \emph{$\{ x, y  \}$-hyperplane}.
We can similarly define $\{ x,z \}$- and $\{ y, z \}$-hyperplanes.  Note that any hyperplane crossed by an edge-path geodesic between any two points
of $\{x,y,z \}$ must be a $\{ a, b \}$-hyperplane, where $\{ a, b \} \subseteq \{ x, y, z \}$.  
If $z \in [x,y]$, $x \in [y,z]$, or $y \in [x,z]$, then the desired conclusion
is clear,  so we assume that none of $x$, $y$, and $z$ is contained in the interval of the other two.  We choose geodesic edge-paths $p_{x}$, $p_{y}$ connecting
$z$ to $x$ and $y$, respectively.

We claim that there is some $\{ x,y \}$-hyperplane $H$ that is crossed by both $p_{x}$ and $p_{y}$.  Indeed, $p_{x}$ crosses only
$\{ x,y \}$- and $\{ y, z \}$-hyperplanes by definition, and $p_{y}$ crosses only $\{ x,y \}$- and $\{ x,z \}$-hyperplanes.  Thus,
if no $\{ x,y \}$-hyperplane is crossed by both $p_{x}$ and $p_{y}$, then $p_{x}^{-1}p_{y}$ crosses no hyperplane more than once,
and is therefore geodesic.  Since $p_{x}^{-1}p_{y}$ passes through $z$, we have $z \in [x,y]$, a contradiction.

Next, we claim that there are geodesic edge-paths $p'_{x}$ and $p'_{y}$ from $z$ to $x$ and $y$ with the property that $p'_{x}$ and $p'_{y}$ cross all
$\{ x,y \}$-hyperplanes before crossing any $\{ x,z \}$- or $\{ y,z \}$-hyperplanes.  We prove only that there is such a $p'_{x}$, since the proof that
there is such a $p'_{y}$ is similar.  To establish the existence of the desired $p'_{x}$, it is sufficient to show that, whenever $p_{x}$ crosses
a $\{ y,z \}$-hyperplane $H'$ before an $\{ x,y \}$-hyperplane $H''$, $H' \cap H'' \neq \emptyset$, for then we can use corner moves to change $p_{x}$
into the desired $p'_{x}$.  We assume the convention that $z \in (H')^{-} \cap (H'')^{-}$.  If $e'$ is the (unique) edge of $p_{x}$ crossing $H'$, then
$\tau(e') \in (H')^{+} \cap (H'')^{-}$.  If $e''$ is the edge of $p_{x}$ crossing $H''$, then $\tau(e'') \in (H')^{+} \cap (H'')^{+}$.
Now note that $y \in (H')^{-} \cap (H'')^{+}$.  We now have $H' \cap H'' \neq \emptyset$, by Lemma \ref{lemma:4pt}.  This proves the claim.

We therefore have $p'_{x}$ and $p'_{y}$ (as above).  Let $H_{1}$ be the first hyperplane crossed by $p'_{x}$.  It is, of course,
an $\{ x,y \}$-hyperplane.  We claim that we can alter $p'_{y}$ to obtain a new geodesic edge-path $p''_{y}$ connecting $z$ to $y$,
such that $p''_{y}$ crosses $H_{1}$ first.  (We note that $p'_{y}$ must cross $H_{1}$, since $H_{1}$ separates $z$ from $y$ by definition.)
It is enough to show that if the hyperplane $\{ x,y \}$-hyperplane $H_{2}$ is crossed by $p'_{y}$ before $H_{1}$, then $H_{1} \cap H_{2} \neq \emptyset$,
for then we can alter $p'_{y}$ by corner moves in order to arrive at the desired $p''_{y}$.  We assume the convention that $z \in (H_{1})^{-} \cap (H_{2})^{-}$.
If $e_{2}$ is the edge of $p'_{y}$ crossing $H_{2}$, then $\tau(e_{2}) \in (H_{1})^{-} \cap (H_{2})^{+}$.  If $e_{1}$ is the edge of $p'_{y}$ crossing $H_{1}$, then
$\tau(e_{1}) \in (H_{1})^{+} \cap (H_{2})^{+}$.  If $e_{x}$ is the edge of $p'_{x}$ crossing $H_{1}$ then $\tau(e_{x}) \in (H_{1})^{+} \cap (H_{2})^{-}$.  It
follows from Lemma \ref{lemma:4pt} that $H_{1} \cap H_{2} \neq \emptyset$.  This proves the claim.  

We've now shown that there are geodesic edge-paths
$p'_{x}$, $p''_{y}$ connecting $z$ to $x$ and $y$ (respectively), and having the same initial edge $\hat{e}$.  We assume $z = \iota(\hat{e})$.  By the induction
hypothesis $[\tau(\hat{e}), y ] \cap [x, \tau(\hat{e})] \cap [x,y]$ is non-empty.  Since 
$$[\tau(\hat{e}), y ] \cap [x, \tau(\hat{e})] \cap [x,y]  \subseteq [z,y ] \cap [x,z] \cap [x,y]$$
by (3), the induction is complete.
\end{proof}

\bibliography{geofinal}

\begin{thebibliography}{10}

\bibitem{BH}
Martin~R. Bridson and Andr{\'e} Haefliger.
\newblock {\em Metric spaces of non-positive curvature}, volume 319 of {\em
  Grundlehren der Mathematischen Wissenschaften [Fundamental Principles of
  Mathematical Sciences]}.
\newblock Springer-Verlag, Berlin, 1999.

\bibitem{CDH}
Indira Chatterji, Cornelia Drutu, and Fr{\'e}d{\'e}ric Haglund.
\newblock {Kazhdan and Haagerup properties from the median viewpoint}.
\newblock {\em arXiv:0704.3749}.

\bibitem{CN}
Indira Chatterji and Graham Niblo.
\newblock From wall spaces to {$\rm CAT(0)$} cube complexes.
\newblock {\em Internat. J. Algebra Comput.}, 15(5-6):875--885, 2005.

\bibitem{Chepoi}
Victor Chepoi.
\newblock Graphs of some {${\rm CAT}(0)$} complexes.
\newblock {\em Adv. in Appl. Math.}, 24(2):125--179, 2000.

\bibitem{CW}
John Crisp and Bert Wiest.
\newblock Embeddings of graph braid and surface groups in right-angled {A}rtin
  groups and braid groups.
\newblock {\em Algebr. Geom. Topol.}, 4:439--472, 2004.

\bibitem{F1}
Daniel Farley.
\newblock The action of {T}hompson's group on a {$\rm CAT(0)$} boundary.
\newblock {\em Groups Geom. Dyn.}, 2(2):185--222, 2008.

\bibitem{Gerasimov}
V.~N. Gerasimov.
\newblock Semi-splittings of groups and actions on cubings.
\newblock In {\em Algebra, geometry, analysis and mathematical physics
  ({R}ussian) ({N}ovosibirsk, 1996)}, pages 91--109, 190. Izdat. Ross. Akad.
  Nauk Sib. Otd. Inst. Mat., Novosibirsk, 1997.

\bibitem{Haglund1}
Fr{\'e}d{\'e}ric Haglund.
\newblock {Isometries of CAT(0) cube complexes are semi-simple}.
\newblock {\em arXiv:0705.3386}.

\bibitem{HP}
Fr{\'e}d{\'e}ric Haglund and Fr{\'e}d{\'e}ric Paulin.
\newblock Simplicit\'e de groupes d'automorphismes d'espaces \`a courbure
  n\'egative.
\newblock In {\em The {E}pstein birthday schrift}, volume~1 of {\em Geom.
  Topol. Monogr.}, pages 181--248 (electronic). Geom. Topol. Publ., Coventry,
  1998.

\bibitem{HW}
Fr{\'e}d{\'e}ric Haglund and Daniel~T. Wise.
\newblock Special cube complexes.
\newblock {\em Geom. Funct. Anal.}, 17(5):1551--1620, 2008.

\bibitem{NR2}
G.~A. Niblo and L.~D. Reeves.
\newblock Coxeter groups act on {${\rm CAT}(0)$} cube complexes.
\newblock {\em J. Group Theory}, 6(3):399--413, 2003.

\bibitem{NR1}
Graham Niblo and Lawrence Reeves.
\newblock Groups acting on {${\rm CAT}(0)$} cube complexes.
\newblock {\em Geom. Topol.}, 1:approx.\ 7 pp.\ (electronic), 1997.

\bibitem{Nica}
Bogdan Nica.
\newblock Cubulating spaces with walls.
\newblock {\em Algebr. Geom. Topol.}, 4:297--309 (electronic), 2004.

\bibitem{Roller}
Martin Roller.
\newblock Poc-sets, median algebras and group actions. an extended study of
  dunwoody's construction and sageev's theorem.
\newblock {\em Dissertation}.

\bibitem{Sageev}
Michah Sageev.
\newblock Ends of group pairs and non-positively curved cube complexes.
\newblock {\em Proc. London Math. Soc. (3)}, 71(3):585--617, 1995.

\bibitem{Wise}
D.~T. Wise.
\newblock Cubulating small cancellation groups.
\newblock {\em Geom. Funct. Anal.}, 14(1):150--214, 2004.

\end{thebibliography}
\nocite{*}
\bibliographystyle{plain}

\end{document}